\newtheorem{theo}{Theorem}[section] 
\newtheorem{prop}[theo]{Proposition}
\newtheorem{ex}[theo]{Example}
\newtheorem{lem}[theo]{Lemma} 
\newtheorem{cor}[theo]{Corollary}
\newtheorem{rem}[theo]{Remark}
\theoremstyle{definition}
\newtheorem{defi}[theo]{Definition}
\numberwithin{equation}{section}
\newcommand{\C}{\mathbb C} 
\newcommand{\R}{\mathbb R} 
\newcommand{\B}{\mathbb B}
\newcommand{\partx}{\partial/\partial x}
\begin{document} 
\begin{abstract}
Let $D=\{\rho<0\}$ be a smooth relatively compact domain  in a four dimensional almost  
complex manifold $(M,J)$, where $\rho$ is
a $J$-plurisubharmonic function on a neighborhood of $\overline{D}$ and strictly 
$J$-plurisubharmonic on a neighborhood of $\partial D$. We give sharp estimates of the Kobayashi metric. 
Our approach  is based on an asymptotic quantitative  description 
of both the domain $D$ and the almost complex structure $J$ near a boundary point. Following Z.M.Balogh and M.Bonk \cite{ba-bo}, 
these  sharp estimates provide the Gromov hyperbolicity of the  domain $D$.
\end{abstract}
\title[Sharp estimates of the Kobayashi metric and Gromov~hyperbolicity]
{ Sharp estimates of the Kobayashi metric and Gromov~hyperbolicity} 
\author{Florian Bertrand}
\address{LATP, C.M.I, 39 rue Joliot-Curie 13453 Marseille cedex 13, FRANCE }
\email{bertrand@cmi.univ-mrs.fr}
\subjclass[2000]{32Q45, 32Q60, 32Q65, 32T25, 32T15, 54E35}
\keywords{Almost complex structure, Kobayashi metric,  Gromov hyperbolic space}
\maketitle 
\section*{Introduction} 

One can define different notions of hyperbolicity  on a given manifold, based on geometric structures, 
and it seems natural to try to connect them. For instance, the links between the symplectic hyperbolicity and the Kobayashi 
hyperbolicity were studied by A.-L.Biolley \cite{bio}. In the article \cite{ba-bo}, Z.M.Balogh and M.Bonk established
deep connections between the Kobayashi hyperbolicity and the Gromov hyperbolocity, based on sharp asymptotic estimates of the 
Kobayashi metric. Since the Gromov hyperbolicity may be defined on any geodesic space, it is natural to 
understand its links with the Kobayashi hyperbolicity in the most general manifolds on which the Kobayashi metric can be defined, 
namely the almost complex manifolds.  As emphasized by \cite{ba-bo}, it is necessary 
to study precisely the Kobayashi metric. Since there is no exact expression of this pseudometric, 
except for particular domains where geodesics can be determined explicitely, we are interested in the 
boundary behaviour of the Kobayashi metric and  in its asymptotic geodesics.
One can note that boundary estimates of this  invariant pseudometric, whose existence is 
directly issued from the existence of pseudoholomorphic discs proved by A.Nijenhuis-W.Woolf  \cite{ni-wo}, is also 
a fundamental tool for the study of the extension of diffeomorphisms and for the classification of manifolds. 

The first results in this direction are due to I.Graham \cite{gra}, who gave boundary  estimates of the  
Kobayashi metric near a strictly pseudoconvex boundary point, providing the (local) 
complete hyperbolicity near such a point. Considering a $L^2$-theory approach, D.Catlin \cite{ca} obtained similar estimates on 
pseudoconvex domains of finite type in $\C^2$. A crucial progress in the strictly pseudoconvex case is due to 
D.Ma \cite{ma}, who gave an optimal asymptotic description of this metric. His approach is 
based on a localization principle given by F.Forstneric and J.-P.Rosay \cite{fo-ro} using some purely complex 
analysis arguments as peak holomorphic functions. The estimates proved by D.Ma were used in \cite{ba-bo} to prove the 
Gromov hyperbolicity of relatively compact strictly pseudoconvex domains. The aim of this paper is to obtain 
sharp estimates of the Kobayashi metric on strictly pseudoconvex domains in four almost complex manifolds: 
\vskip 0,3cm
\noindent{\bf Theorem A}. 
{\it Let $D$ be a relatively compact strictly $J$-pseudoconvex smooth domain 
in a four dimensional almost complex manifold $(M,J)$.  
Then for every $\varepsilon>0$, there exists $0<\varepsilon_0<\varepsilon$ and positive constants $C$ and $s$
such that for every $p \in D\cap N_{\varepsilon_0}(\partial D)$ and every $v=v_n+v_t \in T_{p}M$ we have 
\begin{eqnarray}\label{3est2a}
e^{-C\delta(p)^s}\left(\frac{|v_n|^2}{4\delta(p)^2}+
\frac{\mathcal{L}_J\rho(\pi(p),v_t)}{2\delta(p)}\right)^{\frac{1}{2}} 
&\leq &K_{(D,J)}(p,v) \nonumber \\
&&\nonumber\\
&&\hspace{0.75cm}\leq e^{C\delta(p)^s}\left(\frac{|v_n|^2}{4\delta(p)^2}+
\frac{\mathcal{L}_J\rho(\pi(p),v_t)}{2\delta(p)}\right)^{\frac{1}{2}}.
\end{eqnarray}}
\vskip 0,3cm
In the above theorem, $\delta(p):={\rm dist}(p,\partial D)$, where $\rm{dist}$ is taken with respect to a Riemannian metric. 
For $p$ sufficiently close to the boundary the point $\pi(p)$ denotes the unique boundary point such that $\delta(p)=\|p-\pi(p)\|$. 
Moreover $N_{\varepsilon_0}(\partial D):=\{q \in M, \delta(q)<\varepsilon_0\}$. 
We point out that the splitting $v=v_n+v_t \in T_{p}M$ in tangent and normal components in (\ref{3est2a}) 
is understood to be taken at $\pi(p)$.



\vskip 0,5cm
As a corollary of Theorem A, we obtain:  
\vskip 0,3cm
\noindent{\bf Theorem B}.{\it 
\begin{enumerate} 
\item Let $D$ be a relatively compact strictly $J$-pseudoconvex smooth domain 
in an almost complex manifold $(M,J)$  of dimension four. Then the domain $D$ endowed with the Kobayashi integrated distance 
$d_{(D,J)}$ is a Gromov hyperbolic metric space.
\item Each point in a four dimensional almost complex manifold admits a 
basis of Gromov hyperbolic neighborhoods.
\end{enumerate}
}
\vskip 0,3cm

The paper is organized as follows. In Section 1, we give general facts about almost complex manifolds. In Section 2, we
show how to deduce Theorem B from Theorem A. Finally, Section 3 is devoted to the proof of our main result, namely 
Theorem A.

\section{Preliminaries}
We denote by $\Delta$ the unit disc of $\C$ and by $\Delta_{r}$ 
the disc of $\C$ centered at the origin of radius $r>0$.
\subsection{Almost complex manifolds and pseudoholomorphic discs}

An {\it almost complex structure} $J$ on a real smooth manifold $M$ is a $\left(1,1\right)$ tensor field
 which  satisfies $J^{2}=-Id$. We suppose that $J$ is smooth.
The pair $\left(M,J\right)$ is called an {\it almost complex manifold}. We denote by $J_{st}$
the standard integrable structure on $\C^{n}$ for every $n$.
A differentiable map $f:\left(M',J'\right) \longrightarrow \left(M,J\right)$ between two almost complex manifolds is said to be 
 {\it $\left(J',J\right)$-holomorphic}  if $J\left(f\left(p\right)\right)\circ d_{p}f=d_{p}f\circ J'\left(p\right),$ 
for every $p \in M'$. In case  $M'=\Delta \subset \C$, such a map is called a {\it pseudoholomorphic disc}.  
If $f:\left(M,J\right)\longrightarrow M'$ is a diffeomorphism, we define an almost complex structure, $f_{*}J$,  
on $M'$ as the
 direct image of $J$ by $f$:
$$f_{*}J\left(q\right):=d_{f^{-1}\left(q\right)}f\circ J\left(f^{-1}\left(q\right)\right)\circ d_{q}f^{-1},$$ 
 for every  $q \in M'$.

The following lemma (see \cite{ga-su}) states that locally any almost
 complex manifold can be seen as the unit ball of 
$\C^n$ endowed with a small smooth pertubation of the standard
 integrable structure $J_{st}$. 
\begin{lem}\label{ilemloc}
Let $\left(M,J\right)$ be an almost complex manifold, with $J$ of class
 $\mathcal{C}^{k}$, $k\geq 0$. 
Then for every point $p \in M$ and every $\lambda_0 > 0$ there exist a neighborhood $U$ of $p$ and
 a coordinate diffeomorphism $z: U \rightarrow \B$ centered a $p$ (ie $z(p) =0$) such that  the
direct image of $J$ satisfies $z_{*}J\left(0\right) = J_{st}$ and
$||z_*\left(J\right) - J_{st}||_{\mathcal{C}^k\left(\bar {\B}\right)}
 \leq \lambda_0$.
\end{lem}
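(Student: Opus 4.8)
The plan is to reach the normalization through three successive reductions, each implemented by an explicit change of coordinates, so that the final map is the composition of a chart, a constant linear isomorphism, and a dilation. The only genuine content is a standard linear-algebra fact together with the behaviour of the direct image of $J$ under scaling.

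First I would fix an arbitrary smooth chart centered at $p$, identifying a neighborhood of $p$ with a neighborhood $V$ of the origin in $\R^{2n}\cong\C^n$ and transporting $J$ to a $\mathcal{C}^k$ almost complex structure $J_1$ on $V$. At the origin, $J_1(0)$ is some linear complex structure on $\R^{2n}$. Since every linear complex structure on $\R^{2n}$ is conjugate to $J_{st}$ under $GL(2n,\R)$, the set of such structures being the homogeneous space $GL(2n,\R)/GL(n,\C)$, there is a constant linear isomorphism $A$ with $A\circ J_1(0)\circ A^{-1}=J_{st}$. Composing the chart with $A$ yields a coordinate map $z$, defined on a possibly smaller neighborhood, whose direct image $\tilde J:=z_*J$ is a $\mathcal{C}^k$ structure with $\tilde J(0)=J_{st}$. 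This already secures the pointwise normalization $z_*J(0)=J_{st}$.

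The remaining point is to make the full $\mathcal{C}^k$ norm of $\tilde J-J_{st}$ small, and here the key mechanism is a dilation. For small $r>0$ with $\{|w|<r\}\subset V$, set $U':=z^{-1}(\{|w|<r\})$ and define the rescaled chart $z_r:=r^{-1}z:U'\to\B$. Using the direct-image formula $f_*J(q)=d_{f^{-1}(q)}f\circ J(f^{-1}(q))\circ d_qf^{-1}$ with $f(w)=r^{-1}w$, the scalar factors $r^{-1}$ and $r$ cancel in the conjugation, and one obtains the clean identity $(z_r)_*J(q)=\tilde J(rq)$ for $q\in\B$. Thus, on the closed unit ball, the rescaled structure is simply $\tilde J$ reparametrized over the small ball $\{|w|\le r\}$.

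It then remains to estimate $\|\tilde J(r\,\cdot)-J_{st}\|_{\mathcal{C}^k(\bar\B)}$ as $r\to 0$. For the derivatives of order $m\ge 1$, differentiating $q\mapsto\tilde J(rq)$ produces a factor $r^m$ times $(\partial^m\tilde J)(rq)$, which is bounded by $r^m$ times the $\mathcal{C}^k$ norm of $\tilde J$ on a fixed small ball and hence tends to $0$. For the zeroth-order term, $\sup_{q\in\bar\B}|\tilde J(rq)-J_{st}|=\sup_{q\in\bar\B}|\tilde J(rq)-\tilde J(0)|\to0$ by uniform continuity of $\tilde J$ on a compact neighborhood of the origin; this is exactly where the earlier normalization $\tilde J(0)=J_{st}$ is used. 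Choosing $r$ small enough that this norm is at most $\lambda_0$, and renaming $z_r$ as $z$, completes the argument. The proof is elementary; the one subtlety worth flagging as the main obstacle is that smallness of the zeroth-order part does \emph{not} come for free from the dilation, unlike the higher derivatives which are killed by the scaling factors, and it must instead be supplied by the pointwise normalization carried out beforehand.
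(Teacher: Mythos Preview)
Your proof is correct and follows exactly the strategy the paper itself sketches: take a local chart, compose with a linear isomorphism to normalize $J$ to $J_{st}$ at the origin, then dilate. You have simply fleshed out the details of why the dilation forces the $\mathcal{C}^k$ norm to be small, which the paper leaves implicit.
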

This is simply done by considering a local chart $z: U \rightarrow \B$ centered a $p$ (ie $z(p) =0$), composing it 
with a linear diffeomorphism to insure $z_{*}J\left(0\right) = J_{st}$ and  dilating coordinates.

So let  $J$ be an almost complex structure defined in  a neighborhood $U$ of the origin in $\R^{2n}$, and such 
that $J$ is sufficiently closed to the standard structure in uniform norm on the closure $\overline{U}$ of $U$. 
The $J$-holomorphy equation for a pseudoholomorphic disc 
$u : \Delta \rightarrow U \subseteq \R^{2n}$ is given by  
\begin{equation}\label{eqholo}
\frac{\partial u}{\partial y}-J\left(u\right)\frac{\partial u}{\partial x}=0.
\end{equation}

According to \cite{ni-wo}, for every $p \in M$, there is a neighborhood $V$ of zero in $T_{p}M$, such that for every $v \in V$, 
there is a $J$-holomorphic disc $u$ satisfying $u\left(0\right)=p$ and $d_{0}u\left(\partx\right)=v$.

\subsection{Splitting of the tangent space}

Assume that $J$ is a diagonal almost complex structure defined in a neighborhood of the origin in $\R^4$ and such that 
$J(0)=J_{st}$. Consider a basis $(\omega_1,\omega_2)$ of $(1,0)$ differential forms
for the structure $J$ in a neighborhood of the origin. Since $J$ is
diagonal, we may choose 
$$\omega_j = dz^j - B_{j}(z)d\bar z^j, \mbox{ } j=1,2.$$
Denote by $(Y_1,Y_2)$ the corresponding dual basis
of $(1,0)$ vector fields. Then 
$$Y_j = \frac{\partial}{\partial z^j} -\beta_j(z)\frac{\partial}{\partial \overline{z^j}}, \mbox{ }j=1,2.$$ 
Moreover $B_j(0) = \beta_j(0) = 0$ for $j=1,2$. The basis $(Y_1(0),Y_2(0))$ simply coincides with the canonical (1,0)
basis of $\C^2$.
In particular $Y_1(0)$ is a basis vector of the complex tangent space
$T^J_0(\partial D)$  and $Y_2(0)$ is normal to $\partial D$.
Consider now for $t \geq 0$ the translation $\partial D -
t$ of the boundary of $D$ near the origin. Consider, in a neighborhood of the
origin, a $(1,0)$ vector field $X_1$ (for $J$) such that $X_1(0) = Y_1(0)$
and $X_1(z)$ generates the $J$-invariant tangent space $T^J_z(\partial D - t)$ at
every point $z \in \partial D - t$, $0 \leq t <<1$.
Setting $X_2 = Y_2$, we obtain a basis of vector fields
$(X_1,X_2)$ on $D$ (restricting $D$ if necessary). 
Any complex tangent vector $v \in T_z^{(1,0)}(D,J)$ at
point  $z \in D$ admits the unique
decomposition $v = v_t + v_n$ where $v_t = \alpha_1
X_1(z)$ is the tangent component and $v_n = \alpha_2 X_2(z)$ is the normal
component. Identifying $T_z^{(1,0)}(D,J)$ with $T_zD$ we may
consider the decomposition $v=v_t + v_n$ for each $v \in T_z(D)$.
Finally we consider this decomposition for points $z$ in a neighborhood of
the boundary.

\subsection{Levi geometry}

Let $\rho$ be a $\mathcal{C}^2$ real valued function on
a smooth almost complex manifold  $\left(M,J\right).$
We denote by $d^c_J\rho$ the differential 
form defined by 
\begin{equation}\label{ieqdc}
d^c_J\rho\left(v\right):=-d\rho\left(Jv\right),
\end{equation}
where  $v$ is a section of $TM$. 
The {\it Levi form} of $\rho$ at a point $p\in M$ and a vector 
$v \in T_pM$ is defined by
\begin{equation*}
\mathcal{L}_J\rho\left(p,v\right):=d\left(d^c_J\rho\right)(p)\left(v,J(p)v\right)=dd^c_J\rho(p)\left(v,J(p)v\right).
\end{equation*}
In case $(M,J)=(\C^n,J_{st})$, then $\mathcal{L}_{J_{st}}\rho$ is, up to a positive multiplicative constant, 
the usual standard Levi form:
\begin{equation*}
\displaystyle \mathcal{L}_{J_{st}}\rho(p,v)=4 \sum \frac{\partial^2\rho}{\partial z_j\partial \overline{z_k}}v_j\overline{v_k}.
\end{equation*}

\vspace{0,3cm}

We investigate now how close is the Levi form with respect to $J$ from the standard Levi form. 
For $p \in M$ and $v\in T_pM$, we easily get:
\begin{equation}\label{ieqlevi}
\mathcal{L}_J\rho\left(p,v\right)=\displaystyle \mathcal{L}_{J_{st}}\rho(p,v)+d(d^c_{J}-d^c_{J_{st}})\rho(p)(v,J(p)v)+
dd^c_{J_{st}}\rho(p)(v,J(p)-J_{st})v).
\end{equation}
In local coordinates $(t_1,t_2,\cdots,t_{2n})$ of $\R^{2n}$, (\ref{ieqlevi}) may be written as follows
\begin{eqnarray}\label{ieqlevi2}
\mathcal{L}_J\rho\left(p,v\right)&=&\displaystyle \mathcal{L}_{J_{st}}\rho(p,v)+{}^tv(A-{}^tA)J(p)v+
{ }^t(J(p)-J_{st})vDJ_{st}v+ \nonumber\\
& & { }^t(J(p)-J_{st})vD(J(p)-J_{st})v
\end{eqnarray}
where 
$$A:=\left(\sum_i\frac{\partial u}{\partial t_i}\frac{\partial
    J_{i,j}}{\partial t_k}\right)_{1\le j,k\le 2n}\quad \mathrm{and}\quad
D:=\left(\frac{\partial^2u}{\partial t_j\partial
    t_k}\right)_{1\le j,k\le 2n}.$$

\vspace{0,3cm}

 Let $f$ be a $(J',J)$-biholomorphism 
from $\left(M',J'\right)$ to $\left(M,J\right)$. Then for every $p\in M$ and every $v\in T_pM$:
$$\mathcal{L}_{J'}\rho\left(p,v\right)=\mathcal{L}_{J}\rho\circ
 f^{-1}\left(f\left(p\right),d_pf\left(v\right)\right).$$
This expresses the invariance of the Levi form under pseudobiholomorphisms.

The next proposition is useful in order to compute the Levi form (see \cite{iv-ro}).   
\begin{prop}\label{proplevi}\mbox{ }
Let $p\in M$ and $v\in T_pM$. Then  
$$\mathcal{L}_J\rho\left(p,v\right)=\Delta \left(\rho \circ u\right)
 \left(0\right),$$ 
where $u : \Delta \rightarrow \left(M,J\right)$ is any $J$-holomorphic
 disc satisfying 
$u\left(0\right)=p$ and $d_0u\left(\partial/\partial_x\right)=v$.
\end{prop}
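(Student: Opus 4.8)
The plan is to exploit the naturality of the operator $d^c_J$ under pseudoholomorphic maps and then reduce everything to the flat Laplacian on $\Delta$. The starting point is that $d^c$ pulls back correctly: if $f:(M',J')\to(M,J)$ is any $(J',J)$-holomorphic map and $\rho$ is $\mathcal{C}^2$ on $M$, then $f^*(d^c_J\rho)=d^c_{J'}(\rho\circ f)$. Indeed, for a section $w$ of $TM'$ one has $f^*(d^c_J\rho)(w)=-d\rho(J\,d f(w))$, and the holomorphy relation $J\circ df=df\circ J'$ turns this into $-d\rho(df(J'w))=-d(\rho\circ f)(J'w)=d^c_{J'}(\rho\circ f)(w)$. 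Applying the exterior derivative to both sides, and using that $d$ commutes with pullback, I obtain the key identity
\[
f^*(dd^c_J\rho)=dd^c_{J'}(\rho\circ f).
\]

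Next I would specialize to $M'=\Delta$, $J'=J_{st}$, and $f=u$, and evaluate this two-form at $0$ on the pair $(\partial/\partial x,\partial/\partial y)$. On the left, $u^*(dd^c_J\rho)(0)(\partial/\partial x,\partial/\partial y)=dd^c_J\rho(p)\big(d_0u(\partial/\partial x),d_0u(\partial/\partial y)\big)$. By hypothesis $d_0u(\partial/\partial x)=v$, and since $u$ is $J$-holomorphic and $J_{st}(\partial/\partial x)=\partial/\partial y$ we have $d_0u(\partial/\partial y)=J(p)\,d_0u(\partial/\partial x)=J(p)v$; hence the left-hand side is exactly $dd^c_J\rho(p)(v,J(p)v)=\mathcal{L}_J\rho(p,v)$. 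For the right-hand side I would carry out the flat computation: writing $g=\rho\circ u$ and using $d^c_{J_{st}}g=-\partial_y g\,dx+\partial_x g\,dy$, one finds $dd^c_{J_{st}}g=(\partial_x^2+\partial_y^2)g\,dx\wedge dy$, so evaluation on $(\partial/\partial x,\partial/\partial y)$ yields precisely $\Delta(\rho\circ u)(0)$. Combining the two evaluations gives the stated equality, and since the left-hand side depends only on $(p,v)$ the formula is automatically independent of the chosen disc with the prescribed $1$-jet (such a disc existing by Nijenhuis--Woolf).

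There is no deep obstacle here: the whole content is the naturality identity above, whose only input is $J\circ df=df\circ J'$. The one place that genuinely requires care is the bookkeeping of signs and normalizations hidden in the convention $d^c_J\rho(v)=-d\rho(Jv)$, since it is exactly this normalization that makes the constant in front of the Laplacian equal to $1$; I would double-check the flat computation on $\Delta$ to confirm this, paying attention to the orientation convention $(dx\wedge dy)(\partial/\partial x,\partial/\partial y)=1$.
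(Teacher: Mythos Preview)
Your argument is correct: the naturality identity $f^{*}(d^c_J\rho)=d^c_{J'}(\rho\circ f)$ for $(J',J)$-holomorphic $f$, followed by $d$ and evaluation at $(\partial/\partial x,\partial/\partial y)$, is exactly the clean way to prove this, and your flat computation on $\Delta$ with the convention $d^c_J\rho(v)=-d\rho(Jv)$ is accurate. Note, however, that the paper does not supply its own proof of this proposition; it simply cites Ivashkovich--Rosay, so there is nothing in the paper to compare your route against beyond observing that your argument is the standard one underlying that reference.
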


Proposition \ref{proplevi} leads to the following proposition-definition: 
\begin{prop}\label{iproplevi2}
The two statements  are equivalent:
\begin{enumerate}
\item $\rho \circ u$ is subharmonic for any $J$-holomorphic disc $u :
 \Delta \rightarrow M$.
\item $\mathcal{L}_{J}\rho(p,v)\geq 0$ for every $p \in M$ and every $v
 \in T_pM$. 
\end{enumerate}
\end{prop}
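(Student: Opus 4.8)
The plan is to use Proposition \ref{proplevi} as the bridge between the two conditions, together with the Nijenhuis--Woolf existence of pseudoholomorphic discs with prescribed $1$-jet recalled earlier. Since $\rho\circ u$ is of class $\mathcal{C}^2$, its subharmonicity is equivalent to the pointwise inequality $\Delta(\rho\circ u)\geq 0$, so both statements reduce to statements about the sign of a Laplacian, and Proposition \ref{proplevi} computes precisely this Laplacian at the center of a disc.

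For the implication $(1)\Rightarrow(2)$ I would argue directly. Fix $p\in M$ and $v\in T_pM$. By the cited result of Nijenhuis--Woolf there is a $J$-holomorphic disc $u:\Delta\to M$ with $u(0)=p$ and $d_0u(\partial/\partial x)=v$. If $(1)$ holds then $\rho\circ u$ is subharmonic, hence $\Delta(\rho\circ u)(0)\geq 0$, and Proposition \ref{proplevi} yields $\mathcal{L}_J\rho(p,v)=\Delta(\rho\circ u)(0)\geq 0$. Since $p$ and $v$ are arbitrary, $(2)$ follows.

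The converse $(2)\Rightarrow(1)$ is the step requiring a little more care, since Proposition \ref{proplevi} only gives the Laplacian at the center $0$, whereas the subharmonicity of $\rho\circ u$ must be checked at every interior point. The idea is to reparametrize: given an arbitrary disc $u:\Delta\to M$ and an interior point $\zeta_0$, I would consider for small $r>0$ the disc $w(\zeta):=u(\zeta_0+r\zeta)$, which is again $J$-holomorphic because precomposition with the affine (hence $J_{st}$-holomorphic) map $\zeta\mapsto\zeta_0+r\zeta$ preserves the holomorphy equation (\ref{eqholo}). Then $w(0)=u(\zeta_0)$ and $d_0w(\partial/\partial x)=r\,d_{\zeta_0}u(\partial/\partial x)$, so Proposition \ref{proplevi} applies to $w$ at its center. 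Comparing the two sides under the scaling $\zeta\mapsto\zeta_0+r\zeta$ --- the Laplacian picks up a factor $r^2$ while the Levi form is homogeneous of degree $2$ in its vector argument --- the factors $r^2$ cancel and one obtains $\Delta(\rho\circ u)(\zeta_0)=\mathcal{L}_J\rho\big(u(\zeta_0),d_{\zeta_0}u(\partial/\partial x)\big)$. Invoking $(2)$ gives $\Delta(\rho\circ u)(\zeta_0)\geq 0$, and since $\zeta_0$ is arbitrary, $\rho\circ u$ is subharmonic, establishing $(1)$.

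The only genuine obstacle is the bookkeeping in this last step: verifying that the reparametrized disc remains $J$-holomorphic and that the quadratic homogeneity of $\mathcal{L}_J\rho$ in $v$ matches the $r^2$-scaling of the Laplacian. Once that is in place, the equivalence follows formally, with Proposition \ref{proplevi} carrying out all of the analytic work.
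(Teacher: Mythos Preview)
Your proposal is correct and is precisely the argument the paper has in mind: the paper does not spell out a proof but simply writes ``Proposition~\ref{proplevi} leads to the following proposition-definition,'' and your two implications (Nijenhuis--Woolf for $(1)\Rightarrow(2)$, affine reparametrization for $(2)\Rightarrow(1)$) are exactly the standard way to unpack that sentence. The homogeneity bookkeeping you flag is routine and works as you describe.
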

If one of the previous statements is satisfied we say that $\rho$ is
 {\it $J$-plurisubharmonic}. We say that $\rho$ is {\it strictly $J$-plurisubharmonic} if
  $\mathcal{L}_{J}\rho(p,v)$ is positive  
for any $p \in M$ and any $v \in T_pM\setminus\{0\}$. 
Plurisubharmonic functions play a very important role in almost complex geometry:  they give attraction and 
localization properties for pseudoholomorphic discs. For this reason the 
construction of $J$-plurisubharmonic functions is crucial. 

\vspace{0,3cm}

Similarly to the integrable case, one may define the notion of pseudoconvexity in almost complex manifolds.
Let $D$ be a domain in $\left(M,J\right)$. We denote by $T^{J}\partial D:=T\partial D\cap JT\partial D$ the
 $J$-invariant subbundle of $T\partial D.$

\begin{defi}\mbox{ }

\begin{enumerate}
\item The domain $D$ is   $J$-pseudoconvex (resp. it strictly
 $J$-pseudoconvex) 
if  $\mathcal{L}_{J}\rho(p,v)\geq 0$ (resp. $>0$) for any 
$p \in \partial D$ and $v \in T^J_p\partial D$ (resp. $v \in
 T^J_p\partial D \setminus\{0\}$).
\item A $J$-pseudoconvex region is a domain $D=\{\rho<0\}$ where $\rho$
 is a $\mathcal{C}^2$ defining function,
$J$-plurisubharmonic on a neighborhood of $\overline{D}$. 
\end{enumerate}
\end{defi}
We recall that a defining function for $D$ satisfies $d\rho\neq 0$ on $\partial D$.  

\vspace{0,3cm}

We need the following lemma due to E.Chirka \cite{ch2}.

\begin{lem}\label{3lemchir}
Let $J$ be an almost complex structure of class $\mathcal{C}^1$ defined in the unit ball $\B$ of 
$\R^{2n}$ satisfying $J(0)=J_{st}$. Then there exist  positive constants $\varepsilon$ and  $A_\varepsilon=O(\varepsilon)$ 
such that the function ${\rm log} \|z\|^2+A_\varepsilon\|z\|$ is $J$-plurisubharmonic on $\B$ whenever  
$\|J-J_{st}\|_{\mathcal{C}^{1}(\overline\B)}\leq \varepsilon$. 
\end{lem}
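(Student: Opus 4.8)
The plan is to compute the Levi form of $\rho := \log\|z\|^2 + A_\varepsilon\|z\|$ with respect to $J$ directly from the comparison formula (\ref{ieqlevi2}), treating $\mathcal{L}_{J_{st}}\rho$ as the main term and the structure-dependent terms as errors to be absorbed. First I would record the two standard computations. Writing $r=\|z\|$ and $\langle v,z\rangle=\sum_j v_j\overline{z_j}$, a direct calculation of the complex Hessians gives
\[
\mathcal{L}_{J_{st}}\log\|z\|^2(z,v)=4\left(\frac{|v|^2}{r^2}-\frac{|\langle v,z\rangle|^2}{r^4}\right)\geq 0,
\]
which by Cauchy--Schwarz is nonnegative but degenerates exactly when $v\in\C z$, and
\[
\mathcal{L}_{J_{st}}\|z\|(z,v)=\frac{2|v|^2}{r}-\frac{|\langle v,z\rangle|^2}{r^3}\geq\frac{|v|^2}{r},
\]
again by Cauchy--Schwarz. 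Thus the singular part $\log\|z\|^2$ is $J_{st}$-plurisubharmonic but possibly flat in one complex direction, while $\|z\|$ supplies a uniformly positive contribution $A_\varepsilon|v|^2/r$ which I will use to swallow all error terms.

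Next I would expand $\mathcal{L}_J\rho(z,v)$ via (\ref{ieqlevi2}) as $\mathcal{L}_{J_{st}}\rho(z,v)$ plus the $A$-term ${}^tv(A-{}^tA)J(z)v$, the cross term ${}^t(J(z)-J_{st})v\,DJ_{st}v$, and the quadratic term ${}^t(J(z)-J_{st})v\,D(J(z)-J_{st})v$, where $D$ is the real Hessian of $\rho$. The crucial step, which I expect to carry the whole proof, is the sizing of these corrections. Since $J(0)=J_{st}$ and $\|J-J_{st}\|_{\mathcal{C}^1(\overline{\B})}\leq\varepsilon$, the mean value theorem yields the \emph{improved} pointwise bound $|J(z)-J_{st}|\leq\varepsilon r$, not merely $\varepsilon$. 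Combined with $|\nabla\rho|=O(1/r)$ and $|D|=O(1/r^2)$ (both dominated by the $\log\|z\|^2$ part near the singularity), each correction is then $O(\varepsilon/r)\,|v|^2$: the $A$-term because $|A|\leq C\varepsilon|\nabla\rho|=O(\varepsilon/r)$; the cross term because $(\varepsilon r)(1/r^2)=\varepsilon/r$; and the quadratic term because $(\varepsilon r)^2(1/r^2)=\varepsilon^2$, which for $r<1$ and $\varepsilon$ small is again $\leq C\varepsilon/r$. I would stress that without the factor $r$ gained from $J(0)=J_{st}$ the cross term would only be $O(\varepsilon/r^2)$, hopeless against the available positivity; this gain is the heart of the matter.

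Finally I would assemble the estimate. Discarding the nonnegative term $\mathcal{L}_{J_{st}}\log\|z\|^2$ and keeping $A_\varepsilon\mathcal{L}_{J_{st}}\|z\|\geq A_\varepsilon|v|^2/r$, I obtain a constant $C>0$ depending only on $n$ such that
\[
\mathcal{L}_J\rho(z,v)\geq A_\varepsilon\frac{|v|^2}{r}-C\varepsilon\frac{|v|^2}{r}=(A_\varepsilon-C\varepsilon)\frac{|v|^2}{r}
\]
for every $z\in\B\setminus\{0\}$ and every $v$. Choosing $A_\varepsilon:=C\varepsilon=O(\varepsilon)$ (enlarging $C$ slightly to absorb the $O(\varepsilon^2)$ remainder and to control $|J(z)|$) makes the right-hand side nonnegative, which by Proposition \ref{iproplevi2} is precisely the $J$-plurisubharmonicity of $\rho$ on $\B$. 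The only remaining care is uniformity in $r\in(0,1)$: every term above scales like $|v|^2/r$, so the inequality holds simultaneously near $z=0$ and up to $\partial\B$, completing the argument.
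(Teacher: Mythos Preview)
Your proof is correct and follows essentially the same route as the paper: both arguments use the comparison formula (\ref{ieqlevi2}), exploit the crucial improved pointwise bound $|J(z)-J_{st}|\leq\varepsilon\|z\|$ coming from $J(0)=J_{st}$ to reduce every correction term to size $O(\varepsilon/\|z\|)\,|v|^2$, and then absorb these errors with the positive contribution $A_\varepsilon|v|^2/\|z\|$ furnished by $\mathcal{L}_{J_{st}}\|z\|$. The only cosmetic difference is that the paper estimates $\mathcal{L}_J\|z\|$ and $\mathcal{L}_J\log\|z\|$ separately with explicit constants (ending with $A_\varepsilon=24\|J-J_{st}\|_{\mathcal{C}^1}$), whereas you treat the sum at once with a generic $C$.
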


\begin{proof}[Proof.]
This is due to the fact that for $p \in \B$ and  $\|J-J_{st} \|_{\mathcal{C}^{1}(\overline\B)}$ sufficiently small, we have: 
\begin{eqnarray*}
\mathcal{L}_JA\|z\|(p,v) & \geq & A\Big(\frac{1}{\|p\|}-\frac{2}{\|p\|}\|J(p)-J_{st}\|\\
&&\\
& & -2(1+\|J(p)-J_{st}\|) \|J-J_{st}\|_{\mathcal{C}^1(\overline\B)}\Big)\|v\|^2\\
\\
&\geq &\frac{A}{2\|p\|}\|v\|^2
\end{eqnarray*}
and
\begin{eqnarray*}
\mathcal{L}_J\ln\|z\|(p,v) &\geq &\Big(-\frac{2}{\|p\|^2}\|J(p)-J_{st}\|-
\frac{1}{\|p\|^2}\|J(p)-J_{st}\|^2-\frac{2}{\|p\|}\|J-J_{st}\|_{\mathcal{C}^1(\overline\B)} \\
&&\\
& & -\frac{2}{\|p\|}\|J(p)-J_{st}\|\|J-J_{st}\|_{\mathcal{C}^1(\overline\B)} \Big)\|v\|^2\\
&&\\
&\geq &-\frac{6}{\|p\|}\|J-J_{st}\|_{\mathcal{C}^1(\overline\B)}\|v\|^2.\\
\end{eqnarray*}
So taking $A=24\|J-J_{st}\|_{\mathcal{C}^1(\overline\B)}$ the Chirka's lemma follows.
\end{proof}

\vspace{0,7cm}

The strict $J$-pseudoconvexity of a relatively compact domain $D$ implies that there is a constant $C\geq 1$ such that:
\begin{equation}\label{3eqlev}
\frac{1}{C}\|v\|^2\leq \mathcal{L}_{J}\rho(p,v) \leq C\|v\|^2,
\end{equation}
for $p \in \partial D$ and $v \in T^J_p(\partial D)$. 

\vspace{0,7cm}

Let $\rho$ be a defining function for $D$, $J$-plurisubharmonic on a neighborhood of $\overline{D}$ and strictly $J$-plurisubharmonic
on a neighborhood of the boundary $\partial D$. Consider the one-form $d^c_J\rho$ defined by (\ref{ieqdc}) 
and let $\alpha$ be its restriction on the tangent bundle $T\partial D$. It follows that $T^{J}\partial D={\rm Ker} \alpha$. 
Due to the strict $J$-pseudoconvexity of $\rho$, the two-form $\omega:=dd^c_J\rho$ is a symplectic form (ie nondegenerate and closed) 
on a neighborhood of $\partial D$, that tames $J$. This implies that  
\begin{equation}\label{3ieqriem}
g_R:=\frac{1}{2}(\omega(.,J.)+\omega(J.,.))
\end{equation}
defines a Riemannian metric. 
We say that $T^J\partial D$ is a {\it contact structure}  and  $\alpha$ is  {\it contact form} 
for $T^J\partial D$. Consequently  vector fields in  $T^J\partial D$ span the whole tangent bundle
$T\partial D$. Indeed if $v\in T^{J}\partial D$, it follows that $\omega(v,Jv)=\alpha([v,Jv])>0$ and thus 
 $[v,Jv] \in  T\partial D \setminus T^{J}\partial D$. 
We point out that in case  $v\in T^{J}\partial D$, the vector fields  $v$ and $Jv$ are 
orthogonal with respect to the Riemannian metric $g_R$.

\subsection{The Kobayashi pseudometric}
The existence of local pseudoholomorphic discs proved by A.Nijenhuis and W.Woolf \cite{ni-wo}
allows to define the {\it Kobayashi-Royden pseudometric}, abusively  called the {\it Kobayashi pseudometric}, 
 $K_{\left(M,J\right)}$ for $p\in M$ and $v \in T_pM$:
\begin{eqnarray*}
\displaystyle K_{\left(M,J\right)}\left(p,v\right)&:=& \inf 
\Big\{\frac{1}{r}>0, u~: \Delta \rightarrow \left(M,J\right) 
\mbox{  $J$-holomorphic }, u\left(0\right)=p, d_{0}u\left(\partx\right)=rv\Big\}.\\
&=&\inf\Big\{\frac{1}{r}>0, u:\Delta_r\rightarrow (M,J), 
\mbox{  $J$-holomorphic }, u\left(0\right)=p, d_{0}u\left(\partx\right)=v\Big\}.
\end{eqnarray*}

Since the composition of pseudoholomorphic maps is still
 pseudoholomorphic, the 
 Kobayashi pseudometric satisfies the decreasing property: 
\begin{prop}\label{propdec}
Let $f~: \left(M',J'\right)\rightarrow \left(M,J\right)$ be a
 $\left(J',J\right)$-holomorphic map. Then for any
 $p \in M'$ and $v \in T_{p}M'$ we have 
$$ K_{\left(M,J\right)}\left(f\left(p\right),d_{p}f\left(v\right)\right)\leq K_{\left(M',J'\right)}\left(p,v\right).$$   
\end{prop}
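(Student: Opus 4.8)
The plan is to exploit the stability of pseudoholomorphicity under composition, which reduces the statement to the observation that any competing disc for the right-hand side produces, by post-composition with $f$, a competing disc for the left-hand side carrying the same parameter $r$. So the proof is essentially a direct verification against the definition of the Kobayashi pseudometric.

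First I would unwind the definition. Fix $\varepsilon>0$ and choose a $J'$-holomorphic disc $u:\Delta\rightarrow(M',J')$ with $u(0)=p$ and $d_0u(\partx)=rv$ such that $1/r\leq K_{(M',J')}(p,v)+\varepsilon$. The goal is then to exhibit an admissible disc for $K_{(M,J)}(f(p),d_pf(v))$ that realizes the same bound $1/r$. The natural candidate is the composition $\tilde u:=f\circ u:\Delta\rightarrow(M,J)$.

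The key step, and the only place where a computation is needed, is checking that $\tilde u$ is genuinely $J$-holomorphic. Using the chain rule $d\tilde u=df\circ du$, together with the holomorphy of $u$ (so that $J'(u)\circ du=du\circ J_{st}$ at each point of $\Delta$) and the $(J',J)$-holomorphy of $f$ (so that $J(f(q))\circ d_qf=d_qf\circ J'(q)$), one computes
\[
J(\tilde u)\circ d\tilde u=J(f(u))\circ df\circ du=df\circ J'(u)\circ du=df\circ du\circ J_{st}=d\tilde u\circ J_{st},
\]
which is exactly the pseudoholomorphy condition (\ref{eqholo}) for $\tilde u$ viewed as a map into $(M,J)$.

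It then remains only to track the initial data: $\tilde u(0)=f(u(0))=f(p)$ and, again by the chain rule, $d_0\tilde u(\partx)=d_pf(d_0u(\partx))=d_pf(rv)=r\,d_pf(v)$. Hence $\tilde u$ is an admissible competitor for $K_{(M,J)}(f(p),d_pf(v))$ with the same $r$, so that $K_{(M,J)}(f(p),d_pf(v))\leq 1/r\leq K_{(M',J')}(p,v)+\varepsilon$, and letting $\varepsilon\to 0$ yields the claim. I do not expect any genuine obstacle: the entire content lies in the composition identity above, which follows immediately from the two holomorphy relations, while everything else is bookkeeping with the definition of the pseudometric.
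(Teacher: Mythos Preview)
Your proof is correct and is exactly the argument the paper has in mind: the text does not spell out a proof but simply remarks, just before the proposition, that the decreasing property follows ``since the composition of pseudoholomorphic maps is still pseudoholomorphic,'' which is precisely the computation you carry out. The only minor quibble is that your reference to (\ref{eqholo}) is slightly off, since that equation is the local coordinate form rather than the intrinsic condition $J\circ d\tilde u=d\tilde u\circ J_{st}$ you actually verify; this does not affect the validity of the argument.
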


Since the structures we consider are smooth enough, we may define  the
integrated pseudodistance $d_{\left(M,J\right)}$  of  $K_{\left(M,J\right)}$:
$$d_{\left(M,J\right)}\left(p,q\right): =\inf\left\{\int_0^1 
K_{\left(M,J\right)}\left(\gamma\left(t\right),\dot{\gamma}\left(t\right)\right)dt,
 \mbox{ }
\gamma~: [0,1]\rightarrow M, \mbox{ }\gamma\left(0\right)=p,
 \gamma\left(1\right)=q\right\}.$$
Similarly to the standard  integrable case, B.Kruglikov \cite{kr2} proved  that
the integrated pseudodistance of the Kobayashi pseudometric coincides with the  Kobayashi pseudodistance 
defined by chains of pseudholomorphic discs.  
\vspace{0,3cm}

We now define the Kobayashi hyperbolicity:  

\begin{defi}\label{defin}\mbox{ }

\begin{enumerate}
\item The manifold $\left(M,J\right)$ is Kobayashi hyperbolic if the
 Kobayashi  pseudodistance $d_{\left(M,J\right)}$ is a distance.
\item The manifold $\left(M,J\right)$ is local Kobayashi hyperbolic at
 $p \in M$ if there exist a neighborhood $U$ of $p$ and a positive
 constant $C$ such that 
$$K_{\left(M,J\right)}\left(q,v\right)\geq C\|v\|$$ for every $q \in U$
 and every $v \in T_qM$.  
\item A Kobayashi hyperbolic manifold $\left(M,J\right)$ is complete
 hyperbolic if it is complete for the distance  $d_{\left(M,J\right)}$.
\end{enumerate}
\end{defi}

\section{Gromov hyperbolicity}

In this section we give some backgrounds about Gromov hyperbolic spaces. Furthermore, according to Z.M.Balogh and M.Bonk
 \cite{ba-bo}, proving that a domain $D$ with some curvature is Gromov hyperbolic reduces to 
providing sharp estimates for the Kobayashi metric $K_{(D,J)}$ near the boundary of $D$.  

\subsection{Gromov hyperbolic spaces}

Let $(X,d)$ be a metric space. 
\begin{defi} 
The Gromov product of two points $x,y \in X$ with respect to the basepoint $\omega \in X$ is defined by 
$$(x|y)_\omega:=\frac{1}{2}(d(x,\omega)-d(y,\omega)-d(x,y)).$$ 
\end{defi}

The Gromov product measures the failure of the triangle inequality to be an equality and is always nonnegative.


\begin{defi}\label{3defigro1} 
The metric space $X$ is Gromov hyperbolic if there is a nonnegative constant $\delta$ such that for any 
$x,y,z,\omega \in X$ one has: 
\begin{equation}\label{3eqgrhy1}
(x|y)_\omega \geq \min((x|z)_\omega,(z|y)_\omega)-\delta.
\end{equation}
\end{defi}
We point out that (\ref{3eqgrhy1}) can also be written as follows: 
\begin{equation}\label{3eqgrhy2}
d(x,y)+d(z,\omega)\leq \max(d(x,z)+d(y,\omega),d(x,\omega)+d(y,z))+2\delta,
\end{equation}
for $x,y,z,\omega \in X$.

\vspace{0,5cm}

There is a family of metric spaces for which Gromov hyperbolicity may be defined by means of geodesic triangles. 
A metric space (X,d) is said to be {\it geodesic space} if any  two points $x,y \in X$ can be joined by a {\it geodesic segment}, 
that is the image of an isometry $g~: [0,d(x,y)] \rightarrow X$ with $g(0)=x$ and $g(d(x,y))=y$. 
Such a segment is denoted by $[x,y]$. A {\it geodesic triangle} in $X$ is the subset $[x,y]\cup [y,z] \cup [z,x]$, where 
$x,y,z \in X$. For a geodesic space $(X,d)$, one may define equivalently (see \cite{gh-ha}) the Gromov hyperbolicity as follows:
\begin{defi}\label{3defigro2} 
The geodesic space $X$ is Gromov hyperbolic if there is a nonnegative constant $\delta$ such that for any 
geodesic triangle $[x,y]\cup [y,z] \cup [z,x]$  and any $\omega \in [x,y]$ one has 
$$d(\omega,[y,z] \cup [z,x])\leq \delta.$$
\end{defi}

\subsection{Gromov hyperbolicity of strictly pseudoconvex domains  in almost complex manifolds of dimension four}
Let $D=\{\rho<0\}$ be a relatively compact $J$-strictly pseudoconvex smooth domain  
in an almost complex manifolds $(M,J)$  of dimension four.
Although the boundary of a compact complex manifold with pseudoconvex boundary  is always connected, 
this is not the case in almost 
complex setting. Indeed D.McDuff  obtained in \cite{mcd} a compact almost complex manifold $(M,J)$ 
of dimension four, with a disconnected  $J$-pseudoconvex boundary. Since $D$ is globally defined by a smooth 
 function, $J$-plurisubharmonic on a neighborhood of $\overline{D}$ and strictly $J$-plurisubharmonic
on a neighborhood of the boundary $\partial D$, it follows that the boundary $\partial D$ of $D$ is connected. 
Moreover this also implies that there are no $J$-complex line contained in $D$ and so that $(D,d_{D,J})$ is a metric space.

\vspace{0,5cm}

A $\mathcal{C}^1$ curve $\alpha~: [0,1] \rightarrow \partial D$ is {\it horizontal} if 
$\dot{\alpha}(s) \in T_{\alpha(s)}^J\partial D$ for every $s\in [0,1]$.  
This is equivalent to $\dot{\alpha}_n\equiv 0$.  
Thus we define the {\it Levi length} of a horizontal curve by 
\begin{equation*}
\displaystyle \mathcal L_J\rho-{\rm length}(\alpha):= \int_0^1 \mathcal L_J\rho(\alpha(s),\dot{\alpha}(s))^{\frac{1}{2}}ds.
\end{equation*}
We point out that, due to (\ref{3ieqriem}), 
\begin{equation*}
\displaystyle \mathcal L_J\rho-{\rm length}(\alpha)= \int_0^1 g_R(\alpha(s),\dot{\alpha}(s))^{\frac{1}{2}}ds.
\end{equation*}
Since  $T^J\partial D$ is a {\it contact structure}, a theorem due to Chow \cite{ch} states that any two points in $\partial D$
may be connected by a  $\mathcal{C}^1$ horizontal curve. This allows to define 
the {\it Carnot-Carath\'eodory metric } as follows:
\begin{equation*}
\displaystyle d_{H}(p,q):= \left\{ \mathcal L_J\rho-{\rm length}(\alpha), 
\alpha~: [0,1] \rightarrow \partial D \mbox{ }\mbox{ horizontal }\mbox{ }, \alpha(0)=p, \alpha(1)=q \right\}.
\end{equation*} 

\vspace{0,5cm}

Equivalently, we may define locally the {\it Carnot-Carath\'eodory metric } by means of vector fields  as follows. 
Consider two $g_R$-orthogonal vector fields 
$v,Jv \in  T^J\partial D$ and the {\it sub-Riemannian metric} associated to $v,Jv$:
$$g_{SR}(p,w):=\inf \left\{a_1^2+a_2^2, \mbox{ } a_1v(p)+a_2(Jv)(p)=w\right\}.$$   
For a horizontal curve $\alpha$, we set 
\begin{equation*}
\displaystyle g_{SR}-{\rm length}(\alpha):= \int_0^1 g_{SR}(\alpha(s),\dot{\alpha}(s))^{\frac{1}{2}}ds.
\end{equation*}
Thus we define: 
\begin{equation*}
\displaystyle d_{H}(p,q):= \left\{g_{SR}-{\rm length}(\alpha), 
\alpha~: [0,1] \rightarrow \partial D \mbox{ }\mbox{ horizontal }\mbox{ }, \alpha(0)=p, \alpha(1)=q \right\}.
\end{equation*}
We point out that for a  small horizontal curve $\alpha$, we have 
$$\dot{\alpha}(s)=a_1(s)v(\alpha(s))+a_2(s)J(\alpha(s))v(\alpha(s)).$$
Consequently 
$$g_{R}(\alpha(s),\dot{\alpha}(s))=\left[a_1^2(s)+a_2^2(s)\right]g_R(\alpha(s),v(\alpha(s))).$$ 
Although the role of the bundle $T^{J}\partial D$ is crucial, it is not essential to define the Carnot-Carath\'eodory 
metric with $g_{SR}$ instead of $g_R$. Actually, two 
Carnot-Carath\'eodory metrics defined with different Riemannian metrics are bi-Lipschitz equivalent (see \cite{gr3}).

\vspace{0,5cm}

According to A.Bellaiche \cite{bel} and M.Gromov \cite{gr3} and since $T\partial D$ is spanned by  
vector fields of $T^J\partial D$ and  Lie Brackets of vector fields of $T^J\partial D$, balls with respect to the 
Carnot-Carath\'eodory metric may be anisotropically approximated. More precisely
\begin{prop}
There exists a positive constant $C$ such that for $\varepsilon$  small enough and $p \in \partial D$:
\begin{equation}\label{3eqboxball}
{\rm Box}\left(p,\frac{\varepsilon}{C}\right) \subseteq \B_H(p,\varepsilon) \subseteq {\rm Box}(p,C\varepsilon),
\end{equation}
where $\B_H(p,\varepsilon):=\{q \in \partial D, d_H(p,q)<\varepsilon \}$ and 
$ {\rm Box}(p,\varepsilon):=\{p+v \in \partial D, |v_t|<\varepsilon, |v_n|<\varepsilon^2\}$.
\end{prop}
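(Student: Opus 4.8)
The plan is to treat $\partial D$ as a three-dimensional contact manifold and to prove the stated ball-box comparison directly, following the scheme of Bellaïche and Gromov. Recall from the discussion of Levi geometry that on $\partial D$ the horizontal distribution $T^J\partial D=\ker\alpha$ is spanned by the $g_R$-orthonormal pair $v,Jv$, and that $[v,Jv]\in T\partial D\setminus T^J\partial D$ since $\alpha([v,Jv])=\omega(v,Jv)>0$. Thus $(v,Jv,[v,Jv])$ is a frame of $T\partial D$ in which the bracket is transverse to the distribution, i.e. the distribution is step-two bracket generating. Fixing $p\in\partial D$, I would choose coordinates $(x_1,x_2,x_3)$ on $\partial D$ centered at $p$ and adapted to this frame, in which the contact form is in Darboux (Heisenberg) normal form $\alpha=dx_3-x_1\,dx_2$ up to higher-order terms; there the horizontal distribution is spanned by $X_1=\partial_{x_1}$ and $X_2=\partial_{x_2}+x_1\partial_{x_3}$ (modulo higher-weight terms), with $[X_1,X_2]=\partial_{x_3}$. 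Assigning the weights $1,1,2$ to $x_1,x_2,x_3$ encodes the anisotropy, and the Euclidean coordinate box $\{|x_1|,|x_2|<\varepsilon,\ |x_3|<\varepsilon^2\}$ coincides, up to uniform constants, with ${\rm Box}(p,\varepsilon)$: indeed $(x_1,x_2)$ measures the horizontal displacement $v_t$, while $x_3$ measures the component of $v_n$ along the transverse (Reeb) direction $J\nu$, $\nu$ being the unit normal to $\partial D$, the normal-to-$\partial D$ part of $q-p$ being automatically $O(|q-p|^2)$.

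For the upper inclusion $\B_H(p,\varepsilon)\subseteq{\rm Box}(p,C\varepsilon)$, let $\gamma(s)$ be a horizontal curve issuing from $p$ with $g_{SR}$-length at most $\varepsilon$; after reparametrizing, $\dot\gamma=a_1X_1+a_2X_2$ with $\int_0^1(a_1^2+a_2^2)^{1/2}\,ds\le\varepsilon$. The length bound gives $|x_1(s)|,|x_2(s)|\le C\varepsilon$ along the curve, so the horizontal displacement is $O(\varepsilon)$. For the transverse coordinate, horizontality forces $\dot x_3=x_1\dot x_2$ up to terms of higher weight, whence $|x_3(1)|\le\int_0^1|x_1|\,|\dot x_2|\,ds\le C\varepsilon\int_0^1|a_2|\,ds\le C\varepsilon^2$. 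This is the quadratic gain in the Reeb direction, and it yields the upper bound with a constant depending only on the normal-form estimates.

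For the lower inclusion ${\rm Box}(p,\varepsilon/C)\subseteq\B_H(p,\varepsilon)$, given a target $q$ with $|v_t|<\varepsilon/C$ and $|v_n|<(\varepsilon/C)^2$, I would build an admissible path in two stages. First travel horizontally to adjust $(x_1,x_2)$ to the horizontal coordinates of $q$; this costs $g_{SR}$-length $O(\varepsilon)$ but also produces some value of $x_3$. Second, correct the remaining transverse discrepancy $\Delta x_3$, of size $O(\varepsilon^2)$, by appending a closed horizontal loop in the $(x_1,x_2)$-plane based at the current point: since along horizontal curves $\Delta x_3=\oint x_1\,dx_2$ equals the signed area enclosed by the loop, a loop of diameter $O(\varepsilon)$, hence of length $O(\varepsilon)$, encloses area of order $\varepsilon^2$ and can realize any prescribed $\Delta x_3$ with $|\Delta x_3|\le C\varepsilon^2$. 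The total $g_{SR}$-length is $O(\varepsilon)$, so $d_H(p,q)<\varepsilon$ after adjusting $C$.

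The main obstacle is uniformity of the constant $C$ in $p\in\partial D$. This I would secure by compactness: the fields $v,Jv,[v,Jv]$ are smooth and, by strict $J$-pseudoconvexity together with (\ref{3eqlev}), $\alpha([v,Jv])=\omega(v,Jv)$ is bounded below uniformly on the compact $\partial D$; hence the normal-form coordinate changes, the comparisons between $g_{SR}$, $g_R$ and the Euclidean length on $T^J\partial D$, and all the higher-order error terms above can be controlled by constants independent of $p$ once $\varepsilon$ is small. The bi-Lipschitz equivalence of Carnot-Carath\'eodory metrics attached to different Riemannian metrics then shows the resulting $C$ is insensitive to the choice of metric, completing the comparison (\ref{3eqboxball}).
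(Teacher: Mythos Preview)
The paper does not give a proof of this proposition: it simply invokes the ball--box theorem of Bella\"{\i}che \cite{bel} and Gromov \cite{gr3}, noting that $T\partial D$ is spanned by vector fields of $T^J\partial D$ together with their Lie brackets. Your sketch is precisely the standard step-two argument from those references, specialized to the three-dimensional contact situation at hand, and it is correct in outline: the Darboux/Heisenberg normal form, the integration $\dot x_3=x_1\dot x_2$ along horizontal curves for the upper inclusion, the area-enclosing loop to correct the transverse component for the lower inclusion, and the compactness argument for uniformity in $p$ are exactly the ingredients of the classical proof. One small point worth tightening is the identification of ${\rm Box}(p,\varepsilon)$ with your coordinate box: as the paper remarks just after the proposition, in coordinates with $p=0$, $J(0)=J_{st}$, $T^J_0\partial D=\{z_1=0\}$, the box is $\partial D\cap\{|z_1|<\varepsilon^2,\ |z_2|<\varepsilon\}$, so you should check that your $(x_1,x_2,x_3)$ coordinates on $\partial D$ are comparable to $(\Re e\,z_2,\Im m\,z_2,\Im m\,z_1)$ restricted to $\partial D$, with $\Re e\,z_1$ recovered from the defining equation as a quadratic term---this is straightforward but should be stated.
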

The splitting $v=v_t+v_n$ is taken at $p$. We point out that choosing local coordinates such that 
$p=0$, $J(0)=J_{st}$ and $T^J_0\partial D=\{z_1=0\}$, then 
 $ {\rm Box}(p,\varepsilon)=\partial D \cap Q(0,\epsilon)$, where $Q(0,\epsilon)$ is the classical polydisc
$ Q(0,\epsilon):=\{z \in \C^2, |z_1|<\varepsilon^2, |z_2|<\varepsilon \}$.

As proved by Z.M.Balogh and M.Bonk \cite{ba-bo}, (\ref{3eqboxball}) allows to approximate the Carnot-Carath\'eodory metric 
by a Riemannian anisotropic metric:
\begin{lem}
There exists a positive constant $C$ such that for any positive $\kappa$ 
$$\frac{1}{C}d_\kappa(p,q) \leq d_H(p,q)\leq Cd_\kappa(p,q),$$
whenever  $d_H(p,q)\geq 1/\kappa$ for $p,q \in \partial D$. Here, the distance $d_\kappa(p,q)$ is taken with respect to 
the Riemannian  metric $g_\kappa$ defined by:
$$g_\kappa(p,v):= \mathcal{L}_J\rho(p,v_h)+\kappa^2|v_n|^2,$$
for $p\in \partial D$  and $v=v_t+v_n \in T_p\partial D$.
\end{lem}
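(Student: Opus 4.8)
The plan is to prove the two inequalities separately, observing that only the bound $d_H \leq Cd_\kappa$ uses the hypothesis $d_H(p,q)\geq 1/\kappa$, while the opposite estimate holds for every $\kappa>0$ with constant $1$. Indeed, along a horizontal curve $\alpha$ one has $\dot\alpha_n\equiv 0$, so by (\ref{3ieqriem}) its $g_\kappa$-length equals $\int \mathcal{L}_J\rho(\alpha,\dot\alpha)^{1/2}=g_R$-length, i.e.\ its Levi-length. Since $d_\kappa$ is an infimum over \emph{all} curves, in particular over the horizontal ones competing in the definition of $d_H$, this gives $d_\kappa(p,q)\leq d_H(p,q)$, hence $\frac1C d_\kappa\leq d_H$ at once.

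For the reverse direction I would first extract the geometry of the endpoints from the box--ball estimate (\ref{3eqboxball}). Writing $\varepsilon=d_H(p,q)$ and $q=p+v$ in the local coordinates of Lemma \ref{ilemloc} (so that the contact space is $\{z_1=0\}$), the inclusion $\B_H(p,\varepsilon)\subseteq{\rm Box}(p,C\varepsilon)$ yields $|v_t|\lesssim\varepsilon$ and $|v_n|\lesssim\varepsilon^2$, while the fact that $q\notin{\rm Box}(p,\varepsilon'/C)$ for every $\varepsilon'<\varepsilon$ (since ${\rm Box}(p,\varepsilon'/C)\subseteq\B_H(p,\varepsilon')$) gives $\max(|v_t|,|v_n|^{1/2})\gtrsim\varepsilon$. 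Together these show $d_H(p,q)\approx\max(|v_t|,|v_n|^{1/2})$, so it only remains to bound $d_\kappa(p,q)$ below by $\varepsilon$.

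To do so, take any curve $\gamma$ from $p$ to $q$ of $g_\kappa$-length $L$, decompose $\dot\gamma=\dot\gamma_t+\dot\gamma_n$, and use the two-sided Levi bound (\ref{3eqlev}) to get $g_\kappa(\gamma,\dot\gamma)\geq \frac1C|\dot\gamma_t|^2+\kappa^2|\dot\gamma_n|^2$. Integrating controls both the total horizontal length $T:=\int|\dot\gamma_t|$, with $T\lesssim L$, and the net transverse motion $N:=|\int\dot\gamma_n|$, with $\kappa N\lesssim L$. The crucial input, and the main obstacle, is the anholonomy (area) estimate: because $v_n$ lies in the direction generated by $[v,Jv]$, the realized transverse displacement obeys $|v_n|\lesssim N+T^2$, the quadratic term being the area swept by the horizontal part of $\gamma$. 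Combining, $|v_n|\lesssim L/\kappa+L^2$. If $\varepsilon\approx|v_t|$ we are done, as $|v_t|\lesssim T\lesssim L$; if instead $\varepsilon\approx|v_n|^{1/2}$, then $\varepsilon^2\lesssim L/\kappa+L^2$, and the hypothesis $1/\kappa\leq\varepsilon$ turns this into $\varepsilon^2\lesssim L\varepsilon+L^2$, which forces $L\gtrsim\varepsilon$. In either case $d_\kappa(p,q)\gtrsim d_H(p,q)$, i.e.\ $d_H\leq Cd_\kappa$, with $C$ independent of $\kappa$.

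The delicate steps are the rigorous form of the anholonomy estimate in the genuine (non-model) contact geometry of $\partial D$ --- this is where the frame $v,Jv,[v,Jv]$ and the structure equations for $d^c_J\rho$ enter to quantify how much transverse displacement a horizontal excursion can produce --- and the uniformity of all constants, which I would obtain by working in the normalized coordinates of Lemma \ref{ilemloc} and invoking the compactness of $\partial D$. These are precisely the ingredients assembled by Balogh and Bonk in \cite{ba-bo}, so the argument follows their lines once (\ref{3eqboxball}) is in hand.
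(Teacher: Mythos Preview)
The paper does not prove this lemma itself; it is stated without proof and attributed to Balogh--Bonk \cite{ba-bo} as a consequence of the box--ball comparison (\ref{3eqboxball}). Your sketch is correct and reproduces precisely the Balogh--Bonk argument you invoke at the end: the easy inequality $d_\kappa\le d_H$ holds because horizontal curves have equal $g_\kappa$- and Levi-length, while the reverse uses the box--ball estimate together with the anholonomy bound $|v_n|\lesssim N+T^2$ coming from the contact form $d^c_J\rho$.
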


\vspace{0,5cm}

The crucial idea of Z.M.Balogh and M.Bonk \cite{ba-bo} to prove the Gromov hyperbolicity of $D$ is to 
introduce a function on $D\times D$, using the Carnot-Carath\'eodory metric, which satisfies $(\ref{3eqgrhy1})$ and which is
roughly similar to the Kobayashi distance. 

For  $p \in D$ we define  a boundary projection map $\pi~: D \rightarrow \partial D$ by 
$$\delta(p)=\|p-\pi(p)\|={\rm dist}(p,\partial D).$$ 
We notice that $\pi(p)$ is  uniquely determined only if $p \in D$ is sufficiently close to the boundary. 
We set 
$$h(p):=\delta(p)^{\frac{1}{2}}.$$
Then we define a map $g~:D\times D \rightarrow [0,+\infty)$ by:
\begin{equation*} 
g(p,q):=2\log \left(\frac{d_H(\pi(p),\pi(q))+\max\{h(p),h(q)\}}{\sqrt{h(p)h(q)}}\right),
\end{equation*}
for $p,q \in D$.
The map $\pi$ is uniquely determined only near the boundary. But an other choice of $\pi$ 
 gives a function $g$ that coincides up to a bounded  additive constant that will not disturb our results. 
The motivation of introducing the map $g$ is related with the Gromov hyperbolic space
${\rm Con}(Z)$ defined by M.Bonk and O.Schramm 
in \cite{bo-sc} (see also \cite{gr2}) as follows. 
Let $(Z,d)$ be a bounded metric space which does not consist of a single point and set 
$${\rm Con}(Z):=Z\times (0,diam(Z)].$$ 
Let us define a map $\widetilde{g}: {\rm Con}(Z)\times {\rm Con}(Z) \rightarrow [0,+\infty)$ by 
$$\widetilde{g}\left((z,h),(z',h')\right):=2\log \left(\frac{d(z,z')+\max\{h,h'\}}{\sqrt{hh'}}\right).$$
M.Bonk and O.Schramm  in \cite{bo-sc} proved that $({\rm Con}(Z),\widetilde{g})$ is a Gromov hyperbolic (metric) space. 

In our case the map $g$ is not a metric on $D$ since two different points $p\neq q \in D$ may have the same projection; nevertheless
\begin{lem} 
The function $g$ satisfies  (\ref{3eqgrhy2}) (or equivalently (\ref{3eqgrhy1})) on $D$.
\end{lem}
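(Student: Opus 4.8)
The plan is to realize $g$ as the pullback, under a natural map $D\to{\rm Con}(\partial D)$, of the Bonk--Schramm function $\widetilde{g}$, and then to transfer the four-point inequality from $\widetilde{g}$ to $g$. Since M. Bonk and O. Schramm \cite{bo-sc} have already shown that $({\rm Con}(\partial D),\widetilde{g})$ is Gromov hyperbolic, $\widetilde{g}$ satisfies (\ref{3eqgrhy2}) with some constant $\delta\ge 0$, and the whole point is that this property, being purely about the values of the function at pairs of points, survives pullback even though the pullback map need not be injective.

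First I would equip $\partial D$ with the Carnot-Carath\'eodory metric $d_H$ and introduce $\Phi\colon D\to{\rm Con}(\partial D)$, $\Phi(p):=(\pi(p),h(p))$ with $h(p)=\delta(p)^{1/2}$. Comparing the defining formula for $g$ with that for $\widetilde{g}$ one reads off at once that
\[
g(p,q)=\widetilde{g}\bigl(\Phi(p),\Phi(q)\bigr)\qquad(p,q\in D),
\]
that is $g=\widetilde{g}\circ(\Phi\times\Phi)$. Now inequality (\ref{3eqgrhy2}) for $g$ at four points $x,y,z,\omega\in D$ reads
\[
g(x,y)+g(z,\omega)\le\max\bigl(g(x,z)+g(y,\omega),\,g(x,\omega)+g(y,z)\bigr)+2\delta;
\]
substituting $g=\widetilde{g}\circ(\Phi\times\Phi)$ turns this into exactly (\ref{3eqgrhy2}) for $\widetilde{g}$ at the four image points $\Phi(x),\Phi(y),\Phi(z),\Phi(\omega)\in{\rm Con}(\partial D)$, which holds by the Bonk--Schramm theorem. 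Hence $g$ satisfies (\ref{3eqgrhy2}) with the same $\delta$. This is precisely the meaning of the ``nevertheless'' in the discussion above: although $\Phi$ may fail to be injective (distinct $p\ne q$ can have $\Phi(p)=\Phi(q)$, so $g$ is not a metric on $D$), the four-point condition is inherited regardless, as it does not require $g$ to be a genuine distance and in particular needs no injectivity of $\Phi$.

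Two elementary points remain. The ambiguity in the choice of the projection $\pi$ away from the boundary changes $g$ only by a bounded additive constant, as already observed, and this merely enlarges $\delta$ without affecting (\ref{3eqgrhy2}). Secondly, one should check that $\Phi$ actually lands in ${\rm Con}(\partial D)$, i.e. that $h(p)=\delta(p)^{1/2}$ stays in $(0,{\rm diam}(\partial D)]$; since $\overline{D}$ is compact the heights $h(p)$ are bounded, and the four-point inequality for $\widetilde{g}$ uses only the triangle inequality for $d_H$ and is insensitive to the upper bound $h\le{\rm diam}(\partial D)$ built into the definition of ${\rm Con}(\partial D)$, so the same estimate holds verbatim for the formula defining $\widetilde{g}$ on $\partial D\times(0,\infty)$ and applies to all $h(p)$ occurring on $D$. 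I do not expect any genuine obstacle here: once the identity $g=\widetilde{g}\circ(\Phi\times\Phi)$ is recorded, the lemma is an immediate consequence of the Bonk--Schramm result, the only work being the bookkeeping of these two technicalities.
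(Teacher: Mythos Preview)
Your proposal is correct, and in fact it is exactly the conceptual route the paper itself motivates just before the lemma by introducing ${\rm Con}(Z)$ and $\widetilde{g}$. The paper, however, does not then invoke the Bonk--Schramm result as a black box; instead it unpacks the relevant piece of that argument and gives a short self-contained proof: one first checks the elementary inequality $r_{12}r_{34}\le 4\max(r_{13}r_{24},r_{14}r_{23})$ for any nonnegative symmetric array $(r_{ij})$ satisfying the triangle inequality, and then applies it with $r_{ij}=d_H(\pi(p_i),\pi(p_j))+\max(h_i,h_j)$; taking logarithms yields (\ref{3eqgrhy2}) with the explicit constant $2\delta=2\log 4$. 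What your approach buys is brevity and a clean structural statement ($g$ is a pullback of $\widetilde{g}$, and the four-point condition survives pullback regardless of injectivity), at the cost of an external citation and the two technicalities you correctly flag. What the paper's approach buys is a self-contained argument, an explicit hyperbolicity constant, and --- since the key inequality uses only the triangle inequality for $d_H$ and places no restriction on the heights --- it sidesteps your second technicality about whether $h(p)$ lies in $(0,{\rm diam}(\partial D)]$ altogether. The two proofs are really the same computation viewed at different levels of abstraction.
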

\begin{proof}[Proof.]
Let $r_{ij}$ be real nonnegative numbers such that 
$$r_{ij}=r_{ji} \mbox{ } \mbox{ and } \mbox{ } r_{ij}\leq r_{ik}+r_{kj},$$
for $i,j,k=1,\cdots,4$. Then 
\begin{equation}\label{3eqgro}
r_{12}r_{34}\leq 4 \max (r_{13}r_{24}, r_{14}r_{23}).
\end{equation}

Consider now four points $p_i \in D$, $i=1,\cdots,4$. We set $h_i=\delta(p_i)^{\frac{1}{2}}$ and 
$d_{i,j}=d_{(H,J)}(\pi(p_i),\pi(p_j))$. Then applying  (\ref{3eqgro}) to $r_{ij}=d_{i,j}+\min (h_i,h_j)$, we obtain: 

\vspace{0.3cm}

$ (d_{1,2}+\min (h_1,h_2)) (d_{3,4}+\max (h_3,h_4))$

\vspace{0.2cm}

\hspace{1cm}$\leq 4\max((d_{1,3}+\max (h_1,h_3))(d_{2,4}+\min (h_2,h_4),
(d_{1,4}+\min (h_1,h_4))(d_{2,3}+\max (h_2,h_3)).$

\vspace{0.3cm}
Then: 
$$g(p_1,p_2)+g(p_3,p_4)\leq \max(g(p_1,p_3)+g(p_2,p_4),g(p_1,p_4)+g(p_2,p_3))+2\log 4,$$
which proves the desired statement.
\end{proof}

As a direct corollary, if a metric $d$ on $D$ is roughly similar 
to $g$, then the metric space $(D,d)$ is Gromov hyperbolic:
\begin{cor}\label{3corgrom}
Let $d$ be a metric on $D$ verifying 
\begin{equation}\label{3eqmet}
-C+g(p,q) \leq d(p,q) \leq g(p,q)+C
\end{equation}
for some  positive constant $C$, and every $p,q \in D$. Then  $d$ satisfies (\ref{3eqgrhy2}) and so 
the metric space $(D,d)$ is Gromov hyperbolic.   
\end{cor}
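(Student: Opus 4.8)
The plan is to exploit the rough similarity (\ref{3eqmet}) in order to transfer the four-point inequality (\ref{3eqgrhy2}), which the preceding lemma has just established for the function $g$, directly to the metric $d$, at the cost of enlarging the additive defect. Since (\ref{3eqgrhy1}) and (\ref{3eqgrhy2}) are equivalent formulations, verifying (\ref{3eqgrhy2}) for $d$ with a uniform constant is enough to conclude that $(D,d)$ is Gromov hyperbolic in the sense of Definition \ref{3defigro1}, which is exactly the assertion of Corollary \ref{3corgrom}.

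Concretely, I would fix four points $x,y,z,\omega \in D$ and chain the two one-sided bounds of (\ref{3eqmet}) around the inequality for $g$. First I would apply the upper estimate $d(a,b)\leq g(a,b)+C$ to the two summands on the left-hand side of (\ref{3eqgrhy2}):
\[
d(x,y)+d(z,\omega)\leq g(x,y)+g(z,\omega)+2C.
\]
Next I would invoke the preceding lemma, which asserts that $g$ satisfies (\ref{3eqgrhy2}) with defect $2\log 4$, to obtain
\[
d(x,y)+d(z,\omega)\leq \max\big(g(x,z)+g(y,\omega),\,g(x,\omega)+g(y,z)\big)+2\log 4+2C.
\]
Finally I would use the lower estimate in the form $g(a,b)\leq d(a,b)+C$ on each of the four terms occurring inside the maximum; this produces an extra $+2C$ inside the max and yields
\[
d(x,y)+d(z,\omega)\leq \max\big(d(x,z)+d(y,\omega),\,d(x,\omega)+d(y,z)\big)+4C+2\log 4,
\]
which is precisely (\ref{3eqgrhy2}) for $d$ with hyperbolicity constant $\delta=2C+\log 4$.

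There is essentially no analytic obstacle here: the argument is a purely formal manipulation of the two one-sided comparisons in (\ref{3eqmet}). The only point that requires a little care is the bookkeeping of the constants, since the bound (\ref{3eqmet}) is applied four times in total — once to each summand on the left via the upper estimate, and once to each of the two summands inside the maximum on the right via the lower estimate — so that the accumulated defect is $4C+2\log 4$, i.e. $\delta=2C+\log 4$, rather than merely $C$. Once (\ref{3eqgrhy2}) is known to hold for $d$ with this uniform constant, the Gromov hyperbolicity of $(D,d)$ follows at once from the equivalence of the two characterizations in Definition \ref{3defigro1}, completing the proof of Corollary \ref{3corgrom}.
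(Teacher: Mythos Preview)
Your proposal is correct and is exactly the routine argument the paper has in mind: the paper does not even spell out a proof of Corollary \ref{3corgrom}, presenting it as an immediate consequence of the preceding lemma, and your chaining of the two one-sided bounds in (\ref{3eqmet}) around the four-point inequality for $g$ is precisely the intended justification. Your bookkeeping of the constant $\delta = 2C + \log 4$ is accurate.
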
 

\vspace{0.5cm}

Z.M.Balogh and M.Bonk \cite{ba-bo} proved that if the Kobayashi metric (with respect to $J_{st}$) of a bounded 
strictly pseudoconvex domain satisfies (\ref{3est2a}), then the Kobayashi distance is rough similar to the function $g$. 
Their proof is purely metric and does not use complex geometry or complex analysis. We point out that 
the strict pseudoconvexity is only needed to obtain (\ref{3eqlev}) or the fact that $T\partial D$ is spanned by 
vector fields of $T^{J_{st}}\partial D$ and Lie Brackets of vector fields of $T^{J_{st}}\partial D$.   
In particular their proof remains valid in the almost complex setting and, consequently, Theorem A implies:
\begin{theo}\label{3corgro}
Let $D$ be a relatively compact strictly $J$-pseudoconvex smooth domain 
in an almost complex manifold $(M,J)$  of dimension four. There is a nonnegative constant $C$ 
such that for any $p,q \in D$ 
$$g(p,q)-C \leq d_{(D,J)}(p,q)\leq g(p,q)+C.$$
\end{theo}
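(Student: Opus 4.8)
The plan is to reduce the statement to the purely metric argument of Balogh--Bonk \cite{ba-bo}, feeding in Theorem A as the essential input. Their comparison $|d_{(D,J)}-g|\le C$ rests on exactly three ingredients: the two-sided asymptotic estimate (\ref{3est2a}) of the Kobayashi metric, the Levi comparison (\ref{3eqlev}), and the fact that $T\partial D$ is generated by $T^J\partial D$ together with the brackets of its sections --- the latter being what yields the box--ball estimate (\ref{3eqboxball}), the anisotropic approximation of $d_H$, and the status of $d_H$ as a genuine metric. None of these uses integrability, and all three are now available in the four-dimensional almost complex category (with (\ref{3est2a}) supplied by Theorem A), so their derivation transfers verbatim. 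Concretely one proves the two inequalities $d_{(D,J)}\le g+C$ and $d_{(D,J)}\ge g-C$ separately.

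For the upper bound I would exhibit an explicit test path and bound its Kobayashi length by the upper estimate in (\ref{3est2a}). Write $\xi=\pi(p)$, $\eta=\pi(q)$, recall $h=\delta^{1/2}$, and set $\delta_0:=\max\{\delta(p),\delta(q),d_H(\xi,\eta)^2\}$. Join $p$ to $q$ by three segments: a normal segment lowering $p$ to depth $\delta_0$, a horizontal segment at depth $\delta_0$ whose projection traces a near-geodesic of $d_H$ from $\xi$ to $\eta$, and a normal segment raising to $q$. Along a normal segment the integrand is comparable to $|\dot\gamma_n|/(2\delta)$, so its length is $\tfrac12|\log\delta_0-\log\delta(p)|$ up to $O(1)$; along the horizontal segment the integrand is comparable to $\mathcal{L}_J\rho(\pi(\gamma),\dot\gamma_t)^{1/2}/(2\delta)^{1/2}$, and since the $d_H$-length of the projected curve is $\le C\,d_H(\xi,\eta)\le C\delta_0^{1/2}$ (using (\ref{3eqboxball}) and the $g_\kappa$-approximation), this segment has bounded length. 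Summing the three contributions reproduces $2\log((d_H(\xi,\eta)+\max\{h(p),h(q)\})/\sqrt{h(p)h(q)})$ up to an additive constant, i.e. $g(p,q)+O(1)$; one checks that the regimes $d_H(\xi,\eta)\ge\max\{h(p),h(q)\}$ and $d_H(\xi,\eta)<\max\{h(p),h(q)\}$ both give this conclusion.

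For the lower bound I would integrate the lower estimate in (\ref{3est2a}) along an arbitrary curve $\gamma$ from $p$ to $q$. The multiplicative factor $e^{-C\delta^s}$ causes no trouble precisely because $s>0$: its discrepancy from $1$ is $O(\delta^s)$, and weighted against the $1/\delta$-type integrand this produces an error $\int\delta^{s-1}\,d\delta=O(1)$ --- this is exactly where the sharpness of Theorem A (the exponent $s$) is indispensable, a crude estimate $\tfrac1C\le\cdots\le C$ being insufficient. It then suffices to bound $\int_0^1(|\dot\gamma_n|^2/(4\delta^2)+\mathcal{L}_J\rho(\pi(\gamma),\dot\gamma_t)/(2\delta))^{1/2}\,dt$ from below by $g(p,q)-C$. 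After replacing $\mathcal{L}_J\rho$ by the comparable $g_R$ via (\ref{3eqlev}), this is the metric minimization of \cite{ba-bo}: the normal part forces the length to dominate $\tfrac12|\log\delta(p)-\log\delta(q)|$, any horizontal displacement of $\pi\circ\gamma$ must be paid at the depth at which it occurs, and optimizing over depth profiles recovers the logarithmic combination defining $g$.

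The main obstacle, and the only place genuine care is needed, is this lower bound: the passage from the pointwise estimate to $\int K_{(D,J)}\ge g-C$ is the technical heart of \cite{ba-bo}, since it must control simultaneously the radial decay and the horizontal transport of an \emph{arbitrary} competitor curve. Equally essential is verifying in detail that their derivation is insensitive to integrability --- that it invokes only (\ref{3est2a}), (\ref{3eqlev}) and the bracket-generating property, and never peak functions or normal-family arguments. A minor additional point is that $\pi$ is single-valued only near $\partial D$, so one records that any other admissible choice of $\pi$, together with the bounded interior region where (\ref{3est2a}) does not apply (and where $d_{(D,J)}$ is comparable to a fixed background Riemannian distance), alters $g$ and $d_{(D,J)}$ by at most an additive constant, which is absorbed into $C$.
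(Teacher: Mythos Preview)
Your proposal is correct and matches the paper's approach exactly: the paper does not give an independent argument but observes that the Balogh--Bonk comparison $|d_{(D,J)}-g|\le C$ is purely metric, resting only on the sharp estimate (\ref{3est2a}) (supplied by Theorem~A), the Levi bound (\ref{3eqlev}), and the bracket-generating property of $T^J\partial D$, none of which require integrability. Your sketch of the two inequalities is in fact more detailed than the paper, which simply records that ``their proof remains valid in the almost complex setting'' and states the theorem as a consequence of Theorem~A.
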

According to Corollary \ref{3corgrom} we finally obtain the following theorem (see also (1) of Theorem B):
\begin{theo}\label{3theogro}
Let $D$ be a relatively compact strictly $J$-pseudoconvex smooth domain 
in an almost complex manifolds $(M,J)$  of dimension four. Then the metric space $(D,d_{(D,J)})$ is Gromov 
hyperbolic.
\end{theo}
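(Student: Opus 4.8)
The plan is to read off the statement as an immediate corollary of the two results already established, namely the rough comparison of Theorem \ref{3corgro} and the purely metric Corollary \ref{3corgrom}. Two points must be in place before the corollary can be applied. First, one needs $d_{(D,J)}$ to be a genuine distance and not merely a pseudodistance, so that $(D,d_{(D,J)})$ is a metric space on which Gromov hyperbolicity is meaningful; this is ensured by the global $J$-plurisubharmonicity of $\rho$ on $\overline{D}$, which excludes any $J$-complex line inside $D$ and was already observed in the discussion preceding the statement. Second, one needs the two-sided bound $g(p,q)-C\leq d_{(D,J)}(p,q)\leq g(p,q)+C$ of Theorem \ref{3corgro}, which is verbatim the hypothesis (\ref{3eqmet}) of Corollary \ref{3corgrom} with $d=d_{(D,J)}$.

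Granting these, the argument is one line. Applying Corollary \ref{3corgrom} to the metric $d_{(D,J)}$ yields that $d_{(D,J)}$ satisfies the four-point inequality (\ref{3eqgrhy2}), and therefore $(D,d_{(D,J)})$ is Gromov hyperbolic in the sense of Definition \ref{3defigro1}.

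The candid remark is that this last step carries no difficulty: all of the analytic weight sits upstream, in the sharp estimate (\ref{3est2a}) of Theorem A and in its repackaging as the rough similarity between $d_{(D,J)}$ and the model function $g$ built from the Carnot-Carath\'eodory metric on $\partial D$. The deduction of Gromov hyperbolicity from that similarity is the Bonk-Schramm and Balogh-Bonk mechanism: $g$ itself satisfies the four-point condition by the elementary inequality (\ref{3eqgro}), and this property persists under rough similarity. The single place where one must confirm that the integrable-case reasoning survives in the almost complex category is the construction of the Carnot-Carath\'eodory structure on $\partial D$, which relies on the Levi comparison (\ref{3eqlev}) and on $T\partial D$ being spanned by the distribution $T^J\partial D$ together with its Lie brackets. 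Both hold here because $\omega=dd^c_J\rho$ tames $J$ and is symplectic near $\partial D$, so its restriction furnishes a contact form with kernel $T^J\partial D$, Chow's theorem applies, and the whole metric argument of \cite{ba-bo} transfers unchanged. Thus the main obstacle, properly located, is the proof of Theorem A rather than anything in the present deduction.
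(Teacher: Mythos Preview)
Your proposal is correct and matches the paper's own argument exactly: the theorem is stated immediately after Theorem \ref{3corgro} with the single sentence ``According to Corollary \ref{3corgrom} we finally obtain the following theorem,'' which is precisely the one-line deduction you describe. Your additional remarks about $d_{(D,J)}$ being a genuine metric and the Carnot-Carath\'eodory structure on $\partial D$ are accurate and already appear in the preceding discussion of the paper.
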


\begin{ex}\label{3ex1}
There exist a neighborhood $U$ of $p$ and a diffeomorphism $z:U \rightarrow \B \subseteq \R^4$,
centered at $p$, such that the function $\|z\|^2$ is strictly
$J$-plurisubharmonic on $U$ and 
$\|z_*(J)-J_{st}\|_{\mathcal C^2(U)} \leq \lambda_0$. 
Hence the unit ball $\B$ equipped with the  metric $\displaystyle d_{(\B(0,1),z_*J)}$ is Gromov hyperbolic. 
\end{ex}
As a direct corollary of Example \ref{3ex1}  we have (see also (2) of Theorem B):
\begin{cor}\label{3cor3}
Let $(M,J)$ be a four dimensional almost complex manifold. Then every point $p \in M$ has a 
basis of Gromov hyperbolic neighborhoods.
\end{cor}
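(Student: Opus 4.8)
The plan is to promote the single Gromov hyperbolic neighborhood supplied by Example \ref{3ex1} into an entire neighborhood basis, exploiting the freedom to dilate coordinates already present in Lemma \ref{ilemloc}. First I would observe that the diffeomorphism $z : U \to \B$ of Example \ref{3ex1} is, by the very definition of the direct image structure, a $(J,z_*J)$-biholomorphism. Applying the decreasing property of Proposition \ref{propdec} to both $z$ and $z^{-1}$ shows that $z$ is an isometry for the Kobayashi distances, that is $d_{(U,J)}(a,b)=d_{(\B,z_*J)}(z(a),z(b))$ for all $a,b \in U$. Since Gromov hyperbolicity of a metric space is invariant under isometry, and since $(\B,z_*J)$ is Gromov hyperbolic by Example \ref{3ex1} (itself an instance of Theorem \ref{3theogro}), it follows that $(U,d_{(U,J)})$ is Gromov hyperbolic; thus $U$ is a Gromov hyperbolic neighborhood of $p$.

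To obtain arbitrarily small such neighborhoods I would invoke the dilation step in the proof of Lemma \ref{ilemloc}. Fix once and for all a value $\lambda_0$ small enough that the conclusion of Example \ref{3ex1} holds, in particular small enough that $\|z\|^2$ is strictly $J$-plurisubharmonic, its Levi form being a small perturbation of the positive standard one by \eqref{ieqlevi2}. Given any neighborhood $W$ of $p$, dilating the coordinates by a sufficiently large factor produces a chart $z : U \to \B$ with $U \subseteq W$, with $z_*J(0)=J_{st}$ and with $\|z_*J - J_{st}\|_{\mathcal{C}^2(\overline{\B})} \le \lambda_0$. For such a chart the hypotheses of Example \ref{3ex1} are satisfied on $U$, so the argument of the previous paragraph shows that $U \subseteq W$ is a Gromov hyperbolic neighborhood of $p$. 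As $W$ ranges over all neighborhoods of $p$, the neighborhoods $U$ produced this way are cofinal among the neighborhoods of $p$ and each is Gromov hyperbolic, so they constitute a basis, which is precisely the assertion of the corollary.

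The main obstacle is to check that the smallness hypothesis on the structure survives the dilation uniformly, so that Example \ref{3ex1} may be applied at every scale. This is exactly the content of the dilation argument in Lemma \ref{ilemloc}: contracting the coordinates by a large factor replaces $z_*J$ by a structure whose deviation from $J_{st}$, together with its first two derivatives, is controlled by the reciprocal of that factor times the original deviation, and hence can be forced below $\lambda_0$. Once this uniform control is secured, the strict $J$-plurisubharmonicity of $\|z\|^2$ and the resulting strict pseudoconvexity of $(\B,z_*J)$ follow from the invariance of the Levi form under pseudobiholomorphisms recorded before Proposition \ref{proplevi}, and Theorem \ref{3theogro} applies verbatim to each rescaled ball.
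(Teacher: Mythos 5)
Your proposal is correct and follows essentially the same route as the paper, which deduces the corollary directly from Example \ref{3ex1}: you simply make explicit the two implicit ingredients, namely that the chart $z$ is a Kobayashi isometry onto $(\B,d_{(\B,z_*J)})$ (so Gromov hyperbolicity transfers back to $U$), and that the dilation step of Lemma \ref{ilemloc} yields such charts on arbitrarily small neighborhoods while keeping $\|z_*J-J_{st}\|_{\mathcal{C}^2}$ below the threshold needed for strict $z_*J$-plurisubharmonicity of $\|z\|^2$. No gaps.
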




\section{Sharp estimates of the Kobayashi metric}
In this section we give a precise localization principle for the Kobayashi metric and we prove Theorem A. 

Let $D=\{\rho<0\}$ be a  domain in an almost complex manifold $(M,J)$, where $\rho$ is a smooth  defining strictly
$J$-plurisubharmonic function. 
For a point $p \in D$ we define 
\begin{equation}
\delta(p):={\rm dist}(p,\partial D),
\end{equation}
and for $p$ sufficiently close to $\partial D$, we define $\pi(p)\in  \partial D$ as the unique boundary 
point such that:
\begin{equation}
\delta(p)=\|p-\pi(p)\|.
\end{equation}
For $\varepsilon>0$, we  introduce
\begin{equation}
N_\varepsilon:=\{p \in D, \delta(p)<\varepsilon \}.
\end{equation}

\subsection{Sharp localization principle}
F.Forstneric and  J.-P.Rosay \cite{fo-ro} obtained a sharp localization principle of the Koba\-yashi metric near 
a strictly $J_{st}$-pseudoconvex boundary point of a domain $D\subset \C^n$. However their approach is based on the 
existence of  some holomorphic peak function at such a  point; this is purely complex and cannot be generalized in the 
nonintegrable case. The sharp localization principle we give is based on some estimates of the Kobayashi length of 
a path near the boundary. 
\begin{prop}\label{3thm}
There exists  a positive constant $r$ such that for every $p \in D$ sufficiently close to the boundary and for every sufficiently small 
neighborhood $U$ of $\pi(p)$ there is a positive constant $c$ such that for every 
  $v\in T_pM$: 
\begin{equation}\label{3est1}
 K_{(D\cap U,J)}(p,v)\geq  (1-c \delta(p)^r) K_{(D \cap U,J)}(p,v).
\end{equation}
\end{prop}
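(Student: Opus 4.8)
The content of the statement is a comparison between the localized metric $K_{(D\cap U,J)}$ and the global metric $K_{(D,J)}$. Since $D\cap U\subseteq D$, the decreasing property (Proposition \ref{propdec}) applied to the inclusion already gives $K_{(D,J)}(p,v)\le K_{(D\cap U,J)}(p,v)$, so the point of the estimate is the reverse inequality up to the factor $(1-c\delta(p)^r)$, i.e. $K_{(D,J)}(p,v)\ge(1-c\delta(p)^r)K_{(D\cap U,J)}(p,v)$. The plan is to take a $J$-holomorphic disc nearly realizing $K_{(D,J)}(p,v)$, to show that it cannot leave $U$ before its argument reaches radius $\sigma=1-c\delta(p)^r$, and then to rescale that portion into $D\cap U$.

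First I would set $\xi_0:=\pi(p)$ and, using Lemma \ref{ilemloc}, work in local coordinates centered at $\xi_0$ in which $J(\xi_0)=J_{st}$ and $\|J-J_{st}\|$ is as small as we please on $\overline\B$; in particular Lemma \ref{3lemchir} is available and supplies the $J$-plurisubharmonic barriers $\log\|z-\xi_0\|^2+A\|z-\xi_0\|$. Composing such a barrier with an arbitrary pseudoholomorphic disc through a point $z$ produces a subharmonic function (Proposition \ref{iproplevi2}), and a Schwarz-type estimate at the origin yields crude lower bounds for the Kobayashi metric near the boundary, of the form $K_{(D,J)}(z,w)\ge c|w_n|/\delta(z)$ in the normal direction and $K_{(D,J)}(z,w)\ge c|w_t|/\sqrt{\delta(z)}$ in the tangential direction. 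Only the existence of some positive exponents here is needed, not the sharp constants of Theorem A.

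The heart of the argument is a lower bound for the Kobayashi \emph{distance}: there are constants $a>0$ and $C=C(U)$ with $d_{(D,J)}(p,q)\ge a\log(1/\delta(p))-C$ for every $q\in D\setminus U$. This is exactly an estimate of the Kobayashi length of a path joining $p$ to the exit region $\partial U$. Integrating the two bounds above along an arbitrary path $\gamma$ from $p$ to $q$, one sees that $\gamma$ either rises to a fixed positive value of $\delta$, paying a normal cost $\gtrsim\log(1/\delta(p))$, or else stays in a thin shell $\{\delta<\varepsilon_0\}$ and travels a tangential Euclidean distance comparable to the size of $U$, paying a tangential cost $\gtrsim\delta(p)^{-1/2}$; minimizing over the height at which $\gamma$ travels shows the normal cost is the cheaper one and produces the stated logarithmic lower bound. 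I expect this uniform control over \emph{all} competing paths, in particular the boundary-hugging ones that force the use of the tangential estimate, to be the main obstacle.

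Granting the distance estimate, I would conclude as follows. Choose a disc $u:\Delta\to D$ with $u(0)=p$, $d_0u(\partx)=\lambda v$ and $1/\lambda\le K_{(D,J)}(p,v)+\epsilon$. The distance-decreasing property gives $d_{(D,J)}(p,u(\zeta))\le\frac12\log\frac{1+|\zeta|}{1-|\zeta|}$ for every $\zeta\in\Delta$, so whenever $u(\zeta)\in\partial U$ the distance estimate forces $\frac12\log\frac{1+|\zeta|}{1-|\zeta|}\ge a\log(1/\delta(p))-C$, hence $|\zeta|\ge 1-c\delta(p)^{2a}=:\sigma$. By continuity $u(\Delta_\sigma)\subset U$, so $\tilde u(\zeta):=u(\sigma\zeta)$ is a $J$-holomorphic disc into $D\cap U$ with $\tilde u(0)=p$ and $d_0\tilde u(\partx)=\sigma\lambda v$; therefore $K_{(D\cap U,J)}(p,v)\le\frac1{\sigma\lambda}\le\frac1\sigma\bigl(K_{(D,J)}(p,v)+\epsilon\bigr)$. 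Letting $\epsilon\to0$ gives $(1-c\delta(p)^r)K_{(D\cap U,J)}(p,v)\le K_{(D,J)}(p,v)$ with $r=2a$, which is the asserted localization; note that $r$ is universal while $c$ depends on $U$ through the constant $C$, as required.
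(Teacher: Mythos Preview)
Your overall architecture --- bound the Kobayashi distance from $p$ to $D\setminus U$ from below by $a\log(1/\delta(p))-C$, feed this into the distance-decreasing property to trap $u(\Delta_\sigma)$ inside $D\cap U$ with $\sigma=1-c\,\delta(p)^r$, then rescale --- is exactly the paper's scheme, and your final paragraph reproduces the paper's conclusion almost verbatim. The difference is in how the distance lower bound is obtained.

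The paper does not split into normal and tangential components and does not argue by a rise-or-travel dichotomy. Instead it invokes a single pointwise estimate (Lemma~4.8 of \cite{le}): in coordinates centred at $\pi(p)$ with $\chi(q)=|z_1(q)|^2+|z_2(q)|^4$, one has $K_{(D,J)}(q,v)\ge c_1\,|d_q\chi(v)|/\chi(q)$. Integrating along any path $\gamma$ from $p$ to $\partial U$ gives
\[
L_{(D,J)}(\gamma)\ \ge\ c_1\int_0^1\Bigl|\tfrac{d}{dt}\log\chi(\gamma(t))\Bigr|\,dt\ \ge\ c_1\,\log\frac{\inf_{\partial U}\chi}{\chi(p)},
\]
and since $\chi(p)=\delta(p)^2$ while $\chi|_{\partial U}\ge c_2(U)>0$, the logarithmic bound falls out in one line --- no case analysis over paths is needed. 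The anisotropic weight $|z_2|^4$ in $\chi$ is precisely what packages the normal and tangential behaviour into a single barrier whose logarithm acts as a potential for the metric; this is what your dichotomy argument is doing by hand.

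Your route via the pair $c|w_n|/\delta$, $c|w_t|/\sqrt{\delta}$ and a height-optimisation over paths is the classical Forstneri\v c--Rosay style and can be pushed through, but, as you yourself flag, controlling the boundary-hugging paths uniformly is genuine work. One further point: Chirka's barrier $\log\|z-\xi_0\|^2+A\|z-\xi_0\|$ from Lemma~\ref{3lemchir} is isotropic in $z-\xi_0$ and by itself yields only $K(z,w)\gtrsim |w|/\|z-\xi_0\|$; it does not produce the tangential $\sqrt{\delta}$ scaling you assert. For that you need a barrier built from the defining function (as in \cite{ga-su}) --- or, more efficiently, just the $\chi$-estimate from \cite{le} that the paper uses, which then renders the whole dichotomy unnecessary.
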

We will give later a more precise version of Proposition \ref{3thm}, where the constants $c$ and $r$ are given 
explicitly (see Lemma \ref{3lemlocquant}).
\begin{proof}
We consider a local diffeomorphism $z$ centered at $\pi(p)$ from a sufficiently small neighborhood $U$ of $\pi(p)$ to $z(U)$ such that 
\begin{enumerate}
\item $z(p)=(\delta(p),0)$, 
\item the structure $z_*J$ satisfies $z_*J(0)=J_{st}$ and is diagonal, 
\item the defining function $\rho\circ z^{-1}$ is locally expressed by:  
$$\rho\circ z^{-1} \left(z\right)=
-2\Re e z_1+2\Re e \sum \rho_{j,k} z_jz_k+ \sum \rho_{j,\overline{k}} z_j\overline{z_k}+O(\|z\|^3),$$  
where $\rho_{j,k}$ and $\rho_{j,\overline{k}}$ are constants satisfying $\rho_{j,k}=\rho_{k,j}$ and  
$\rho_{j,\overline{k}}=\overline{\rho}_{k,\overline{j}}$.
\end{enumerate}
According to Lemma  $4.8$ in \cite{le}, there exists a positive constant 
$c_1$ ($C_{1/4}$ in the notations of \cite{le}), independent of $p$,  such that, shrinking $U$ if necessary, for any $q \in D\cap U$ and any 
$v \in T_q \R^4$: 
$$K_{(D,J)}(q,v)\geq c_1 \frac{\|d_q\chi(v)\|}{\chi(q)},$$
where  $\chi(q):=|z_1(q)|^2+|z_2(q)|^4$.

Let $u~: \Delta \rightarrow D$ be a $J$-holomorphic discs satisfying $u(0)=p \in D$. 
Assume that $u(\Delta)\not\subset D\cap U$ and let $\zeta\in \Delta$ such that $u(\zeta) \in D\cap \partial U$.  We consider a $\mathcal{C^1}$ path 
$\gamma:[0;1]\to D$ from $u(\zeta)$ to the point $p$; so $\gamma (0)=u(\zeta)$ and $\gamma (1)=p$. Without loss of generality we may  suppose 
that $\gamma([0,1[) \subseteq D\cap U$. 
From this we get that the Kobayashi length of $\gamma$ satisfies:
\begin{eqnarray*}
L_{(D,J)}(\gamma) &:= &\int_0^1K_{(D,J)}(\gamma(t),\dot\gamma(t))\mathrm{d}t\\
&&\\
& \geq & c_1 \int_0^1  \frac{\|d_{\gamma(t)}\chi(\dot{\gamma}(t))\|}{\chi(\gamma(t))}dt.
\end{eqnarray*}
This leads to: 
\begin{eqnarray*}
L_{(D,J)}(\gamma) & \geq &    c_1 \int_{\chi(p)}^{\chi(u(s\zeta))} \frac{dt}{t}=c_1 \left|\log\frac{\chi(u(s\zeta))}{\chi(p)}\right|
=c_1 \log\frac{\chi(u(s\zeta))}{\chi(p)},
\end{eqnarray*}
for $p$ sufficiently small.
Since there exists a positive constant $c_2(U)$ such that for all $z \in D\cap \partial U$:
$$\chi(z)\geq c_2(U),$$
and since $\chi(p)=\delta(p)^2$
it follows that
\begin{equation}\label{3eqchemin}
L_{(D,J)}(\gamma)  \geq  c_1\log \frac{c_2(U)}{\delta(p)^2},
\end{equation} 
We set $c_3(U)=c_1\log(c_2(U))$. 

According to the decreasing property of the Kobayashi distance, we have: 
\begin{equation}\label{3eqkob2110}
d_{(D,J)}(p,u(\zeta))\leq d_{(\Delta,J_{st})}(0,\zeta)= \log\frac{1+|\zeta|}{1-|\zeta|}.
\end{equation}
Due to (\ref{3eqchemin}) and (\ref{3eqkob2110}) we have: 
\begin{equation*}
\frac{e^{c_3(U)}- \delta(p)^{2c_1}}{e^{c_3(U)}+\delta(p)^{2c_1}}\leq |\zeta|,
\end{equation*}
and so for $p$ sufficiently close to its projection point $\pi(p)$:  
\begin{equation*}
1-2e^{-c_3(U)}\delta(p)^{2c_1} \leq |\zeta|,
\end{equation*}
This finally proves that  
$$u(\Delta_s)\subset D\cap U$$
with $s:= 1-2e^{-c_3(U)}\delta(p)^{2c_1}.$
\end{proof}


\subsection{Sharp estimates of the Kobayashi metric}

In this subsection we give the proof of Theorem A.  
\begin{proof}
Let $p \in D\cap N_{\varepsilon_0}$ with $\varepsilon_0$ small enough and set $\delta:=\delta(p)$.  Considering  a local diffeomorphism $z:U\rightarrow z(U)\subset \R^4$ such that Proposition 
\ref{3thm} holds, me may  assume that:   
\begin{enumerate}
\item $\pi(p)=0$ and  $p=(\delta,0)$. 
\item $D\cap U \subset \R^4$,
\item The structure $J$ is diagonal and coincides with $J_{st}$ on the complex tangent space $\{z_1=0\}$:
\begin{equation}
J_\C=\left(\begin{array}{cccc} 
a_{1} & \overline{b_{1}} & 0 & 0 \\
b_{1} & \overline{a_{1}} & 0 & 0 \\
0 & 0 & a_{2} & \overline{b_{2}} \\
0 & 0 &  a_{2} & \overline{a_{2}}\\ 
\end{array}\right), 
\end{equation}
with \begin{equation*}
\left\{
\begin{array}{lll}  
a_{l}&=&i+O(\|z_1\|^2),\\
\\
b_{l}&=&O(\|z_1\|),
\end{array}
\right.
\end{equation*}
for $l=1,2$.
\item The defining function $\rho$ is expressed by:  
$$\rho \left(z\right)=
-2\Re e z_1+2\Re e \sum \rho_{j,k} z_jz_k+ \sum \rho_{j,\overline{k}} z_j\overline{z_k}+O(\|z\|^3),$$  
where $\rho_{j,k}$ and $\rho_{j,\overline{k}}$ are constants satisfying $\rho_{j,k}=\rho_{k,j}$ and  
$\rho_{j,\overline{k}}=\overline{\rho}_{k,\overline{j}}$.
\end{enumerate}

\vspace{0.5cm}
Since the structure $J$ is diagonal, the Levi form of $\rho$ at the origin with
respect to the structure $J$ coincides with the Levi form of $\rho$ at the origin with respect to the
structure $J_{st}$ on the complex tangent space.  It follows essentially from \cite{ga-su}. 
\begin{lem}\label{3lemlev0}
Let $v_2=(0,v_2) \in \R^4$ be a tangent vector to $\partial D$ at the origin. We have:
\begin{equation}\label{3eqeas1}
\rho_{2,\overline{2}}|v_2|^2=
\mathcal{L}_{J_{st}}\rho(0,v_2)=\mathcal{L}_{J}\rho(0,v_2).
\end{equation}
\end{lem}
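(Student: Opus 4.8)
The plan is to establish the two equalities separately. The first, $\rho_{2,\overline 2}|v_2|^2 = \mathcal{L}_{J_{st}}\rho(0,v_2)$, is a direct computation from the given Taylor expansion of $\rho$; the second, $\mathcal{L}_{J_{st}}\rho(0,v_2) = \mathcal{L}_J\rho(0,v_2)$, is the substantive point and will be read off from the comparison formula (\ref{ieqlevi2}) after exploiting that $J(0)=J_{st}$, that $d\rho(0)$ is purely normal, and that $J$ is diagonal.

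For the first equality I would substitute the expansion $\rho(z) = -2\Re e\, z_1 + 2\Re e\sum\rho_{j,k}z_jz_k + \sum\rho_{j,\overline k}z_j\overline{z_k} + O(\|z\|^3)$ into the standard Levi form. The linear term and the holomorphic quadratic term $2\Re e\sum\rho_{j,k}z_jz_k$ carry no mixed $\partial/\partial z_j\,\partial/\partial\overline{z_k}$ derivative, and the cubic remainder contributes nothing at the origin, so the only surviving second-order coefficient is $\partial^2\rho/\partial z_j\partial\overline{z_k}(0)=\rho_{j,\overline k}$. Evaluating on the purely tangential vector $v_2=(0,v_2)$, for which the first component vanishes, collapses the Hermitian sum to the single diagonal term $\rho_{2,\overline 2}|v_2|^2$, which gives the claim (up to the normalization constant fixed in the Preliminaries).

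For the second equality I would use the coordinate expression (\ref{ieqlevi2}) at $p=0$. Since $J(0)=J_{st}$, every term in (\ref{ieqlevi2}) carrying an explicit factor $J(0)-J_{st}$ --- namely the last two --- vanishes identically, leaving only $\mathcal{L}_J\rho(0,v_2)=\mathcal{L}_{J_{st}}\rho(0,v_2)+{}^tv_2\,(A-{}^tA)\,J_{st}v_2$, with $A_{j,k}=\sum_i\partial_{t_i}\rho(0)\,\partial_{t_k}J_{i,j}(0)$. I would then note two facts. First, because $\rho=-2\Re e\, z_1+O(\|z\|^2)$, the differential $d\rho(0)$ is supported on the normal coordinate $x_1$, so $A_{j,k}(0)=\partial_{x_1}\rho(0)\,\partial_{t_k}J_{x_1,j}(0)$. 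Second, since $J$ is diagonal the off-diagonal block of the real structure matrix vanishes identically, whence $J_{x_1,j}\equiv 0$ for every index $j$ in the $z_2$-block, and therefore $\partial_{t_k}J_{x_1,j}(0)=0$ there. Contracting with $v_2$ and $J_{st}v_2$, both of which lie in the $z_2$-block (as $J_{st}$ is block diagonal), only entries $(A-{}^tA)_{j,k}$ with $j,k$ in that block appear; by the two facts these all vanish, the correction term is zero, and the second equality follows.

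The routine part is the first equality and the index bookkeeping; the only point requiring care is the second equality, where one must recognize that even though $J(0)=J_{st}$ the two Levi forms could a priori differ through the first derivatives of $J$ encoded in $A$. The crux is that this first-order discrepancy is annihilated by the combination of the purely normal gradient of $\rho$ at the origin with the vanishing off-diagonal block of the diagonal structure $J$, so that a complex-tangential vector never sees it. Should the matrix bookkeeping prove awkward, an equivalent route is to invoke Proposition \ref{proplevi}, computing $\mathcal{L}_J\rho(0,v_2)=\Delta(\rho\circ u)(0)$ for a $J$-holomorphic disc $u$ tangent to $v_2$ and comparing with the $J_{st}$-linear disc, the difference between the two discs being controlled along the $z_2$-direction by $b_l=O(\|z_1\|)$ and $a_l-i=O(\|z_1\|^2)$.
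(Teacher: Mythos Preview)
Your argument is correct. The first equality is indeed a one-line consequence of the Taylor expansion (your remark about the normalization constant is apt; the paper's own statement carries a harmless factor-of-four ambiguity). For the second equality, your analysis of the correction term ${}^tv_2(A-{}^tA)J_{st}v_2$ via (\ref{ieqlevi2}) is sound: since $d\rho(0)$ is supported on $x_1$ and the block-diagonality of $J$ forces $J_{1,j}\equiv 0$ for $j$ in the $z_2$-block, all relevant entries of $A$ and ${}^tA$ vanish.

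The paper, however, takes precisely the alternative route you sketch at the end. It invokes Proposition~\ref{proplevi} and chooses a $J$-holomorphic disc $u$ with $u(0)=0$, $d_0u(\partial/\partial x)=v_2$. The diagonality of $J$ makes the first component satisfy $\partial_{\overline\zeta}u_1 = q_1(u)\,\overline{\partial_\zeta u_1}$ with $q_1=O(\|z\|)$; since $d_0u_1=0$, differentiating gives $\partial_\zeta\partial_{\overline\zeta}u_1(0)=0$, and then $\Delta(\rho\circ u)(0)$ reduces to $\rho_{2,\overline 2}|v_2|^2$. Your matrix-based argument is arguably more transparent once one trusts the identity (\ref{ieqlevi2}): it pinpoints exactly which structural feature of $J$ (the identical vanishing of the off-diagonal block, not merely its vanishing at the origin) is being used, and it avoids any appeal to the existence of pseudoholomorphic discs. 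The paper's disc argument, on the other hand, is self-contained and does not rely on the somewhat opaque bookkeeping of (\ref{ieqlevi2}); it also makes clear why the result can fail for a non-diagonal structure with $J(0)=J_{st}$, as noted in the remark following the lemma.
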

\begin{proof}[Proof of Lemma \ref{3lemlev0}.]
Let $u:\Delta \rightarrow \C^2$ be a $J$-holomorphic disc such that $u(0)=0$ 
and tangent to $v_2$,  
$$u(\zeta) = \zeta v_2 + \mathcal O(|\zeta|^2).$$  
Since $J$ is a diagonal structure, the $J$-holomorphy equation leads to:
\begin{equation}\label{3eqholo}
\frac{\partial u_1}{\partial \overline{\zeta}}= q_1(u)\overline{\frac{\partial u_1}{\partial \zeta}},
\end{equation}
where $q_1(z)=O(\|z\|)$. Moreover, since $\displaystyle d_0 u_1=0$, (\ref{3eqholo}) gives:
$$\frac{\partial^2 u_1}{\partial \zeta \partial \overline{\zeta}}(0) = 0.$$ 
This implies that 
$$\frac{\partial^2 \rho \circ u}{\partial \zeta \partial \overline{\zeta}}(0)= \rho_{2,\overline{2}}|v_t|^2.$$ 
Thus, the Levi form with respect to $J$ coincides with the Levi form with respect to $J_{st}$ on the
 complex tangent space of $\partial D^\delta$ at the origin.
\end{proof}
\begin{rem}
More generally, even if $J(0) = J_{st}$, the Levi form of a function $\rho$ with respect to $J$ at the origin
does not coincide with the Levi form of $\rho$ with respect to $J_{st}$. According to Lemma \ref{3lemlev0} if the structure is diagonal then 
they are equal at the origin on the complex tangent space; but in real dimension greater than four, the structure can not be (genericaly) 
diagonal. K.Diederich and A.Sukhov \cite{di-su} proved that if the structure $J$  satisfies $J(0)=J_{st}$ and $d_zJ=0$ 
(which is always possible by a local diffeomorphism in arbitrary dimensions), 
then the Levi forms coincide at the origin (for all the directions).
\end{rem}
Lemma  \ref{3lemlev0} implies that since the domain $D$ is strictly $J$ pseudoconvex at $\pi(p)=0$, 
we may assume that $\rho_{2,\overline{2}}=1$.  

\vspace{0.7cm}


Consider the following biholomorphism $\Phi$ (for the standard structure $J_{st}$) that removes the harmonic term 
$2\Re e (\rho_{2,2}z_2^2)$: 
\begin{equation}\label{3eqharm}
\Phi(z_1,z_2):=(z_1-\rho_{2,2}z_2^2,z_2).
\end{equation}
The complexification of the structure $\Phi_*J$ admits the following matricial 
representation: 
\begin{equation}\label{3eqcomp}
(\Phi_*J)_\C=\left(\begin{array}{cccc} 
a_{1}(\Phi^{-1}(z)) & \overline{b_{1}(\Phi^{-1}(z))} & c_1(z) & \overline{c_2(z)} \\
b_{1}(\Phi^{-1}(z)) & \overline{a_{1}(\Phi^{-1}(z))} & c_2(z)  & \overline{c_1(z)} \\
0 & 0 & a_{2}(\Phi^{-1}(z)) & \overline{b_{2}(\Phi^{-1}(z))} \\
0 &   0 &  b_{2}(\Phi^{-1}(z)) & \overline{a_{2}(\Phi^{-1}(z))} \\ 
\end{array}\right), 
\end{equation}
where 
\begin{equation*}
\left\{
\begin{array}{lll}  
c_1(z)&:=& 2\rho_{2,2} z_2\left(a_{1}(\Phi^{-1}(z)) -a_{2}(\Phi^{-1}(z)\right) \\
\\
c_2(z)&:=& 2\rho_{2,2} z_2b_{1}(\Phi^{-1}(z))-\overline{\rho_{2,2} z_2}b_{2}(\Phi^{-1}(z)).
\end{array}
\right.
\end{equation*}

\vspace{0.5cm}

In what follows, we need a quantitative version of Proposition \ref{3thm}. 
So we consider the following polydisc  
$Q_{(\delta,\alpha)}:=\{z \in \C^2, |z_1|<\delta^{1-\alpha}, |z_2|<c\delta^{\frac{1-\alpha}{2}}\}$ centered at the origin, 
where $c$ is chosen such that 
\begin{equation}\label{3eqbord}
\Phi(D\cap U)\cap \partial Q_{(\delta,\alpha)} \subset \{z\in \C^{2}, |z_1|=\delta^{1-\alpha}\}. 
\end{equation}
\begin{lem}\label{3lemlocquant}
Let $0<\alpha<1$ be a positive number. There is a  positive constant $\beta$ such that for every sufficiently small $\delta$ we have: 
\begin{equation}\label{3eqloc}
K_{(D\cap U,J)}(p,v)=K_{(\Phi(D\cap U),\Phi_*J)}(p,v)\geq 
\left(1-2\delta^\beta\right)K_{(\phi(D\cap U)\cap Q_{(\delta,\alpha)},\Phi_*J)}(p,v),
\end{equation}
for  $p=(\delta,0)$  and  every $v\in T_p \R^4$.
\end{lem}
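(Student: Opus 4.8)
The plan is to upgrade the qualitative localization of Proposition~\ref{3thm} into the quantitative estimate \eqref{3eqloc} by tracking all constants explicitly through the argument given in the proof of Proposition~\ref{3thm}. The biholomorphism $\Phi$ of \eqref{3eqharm} is tangent to the identity at the origin and fixes $p=(\delta,0)$ up to terms of order $\delta^2$; since it is a $(\Phi_*J,J)$-biholomorphism, the Kobayashi metric is invariant under it, which justifies the first equality in \eqref{3eqloc}. So the real content is the inequality relating $K_{(\Phi(D\cap U),\Phi_*J)}(p,v)$ to $K_{(\Phi(D\cap U)\cap Q_{(\delta,\alpha)},\Phi_*J)}(p,v)$, and everything reduces to estimating, for a $\Phi_*J$-holomorphic disc $u$ through $p$, how large a subdisc $\Delta_s$ must satisfy $u(\Delta_s)\subset \Phi(D\cap U)\cap Q_{(\delta,\alpha)}$.

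First I would re-run the path-integral estimate from the proof of Proposition~\ref{3thm}, but now with the polydisc $Q_{(\delta,\alpha)}$ replacing the fixed neighborhood $U$. The key point is the lower bound $K_{(D,J)}(q,v)\ge c_1 \|d_q\chi(v)\|/\chi(q)$ from Lemma~$4.8$ of \cite{le}, with $\chi(q)=|z_1(q)|^2+|z_2(q)|^4$. By \eqref{3eqbord}, the relevant part of $\partial Q_{(\delta,\alpha)}$ lies in $\{|z_1|=\delta^{1-\alpha}\}$, so along a path $\gamma$ from a boundary-exit point $u(\zeta)$ to $p$ one has $\chi(u(\zeta))\ge \delta^{2(1-\alpha)}$ while $\chi(p)=\delta^2$. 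Integrating $K$ along $\gamma$ as before gives a Kobayashi-length lower bound
\begin{equation*}
L_{(D,J)}(\gamma)\ge c_1\log\frac{\chi(u(\zeta))}{\chi(p)}\ge c_1\log\frac{\delta^{2(1-\alpha)}}{\delta^2}=2c_1\alpha\log\frac{1}{\delta}.
\end{equation*}
Comparing with the disc estimate $d_{(D,J)}(p,u(\zeta))\le \log\frac{1+|\zeta|}{1-|\zeta|}$ from \eqref{3eqkob2110} then forces $|\zeta|$ to be close to $1$; solving the resulting inequality yields $|\zeta|\ge 1-2\delta^{2c_1\alpha}$, so that $u(\Delta_s)\subset \Phi(D\cap U)\cap Q_{(\delta,\alpha)}$ with $s=1-2\delta^{2c_1\alpha}$, and one sets $\beta:=2c_1\alpha$.

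The final step is to convert the radius bound $s$ into the multiplicative factor $(1-2\delta^\beta)$ in front of the localized metric. For any competitor disc $u$ defining $K_{(\Phi(D\cap U),\Phi_*J)}(p,v)$ with $d_0u(\partx)=v/\kappa$, the restriction $\tilde u(\zeta):=u(s\zeta)$ maps $\Delta$ into the localized domain and satisfies $d_0\tilde u(\partx)=s\,v/\kappa$, so it is a competitor for $K_{(\Phi(D\cap U)\cap Q_{(\delta,\alpha)},\Phi_*J)}(p,v)$ with the radius scaled by $s$; taking infima gives exactly \eqref{3eqloc} with $s\ge 1-2\delta^\beta$. The main obstacle I anticipate is the bookkeeping in the second step: one must verify that the constant $c_2(U)$ of the original proof can genuinely be replaced by the $\alpha$-dependent boundary value $\delta^{2(1-\alpha)}$ provided by the geometry of $Q_{(\delta,\alpha)}$, which in turn relies on \eqref{3eqbord}, i.e.\ on choosing $c$ so that the exit through $\partial Q_{(\delta,\alpha)}$ occurs through the face $\{|z_1|=\delta^{1-\alpha}\}$ rather than the lateral face $\{|z_2|=c\delta^{(1-\alpha)/2}\}$. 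Establishing this requires controlling $\Phi(D\cap U)$ near the origin using the normal-form expansion (4) and the smallness of $\Phi_*J-J_{st}$ recorded in \eqref{3eqcomp}, and checking that the path $\gamma$ stays in the region where the $\chi$-lower bound is valid.
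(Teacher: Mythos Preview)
Your proposal is correct and follows exactly the approach the paper takes: the paper's proof consists of the single sentence that this is ``a quantitative repetition of the proof of Proposition~\ref{3thm}'' with $c_2=\delta^{1-\alpha}$ (meaning $\chi\ge \delta^{2(1-\alpha)}$ on the exit face, by \eqref{3eqbord}), yielding $\beta=2\alpha c_1$. You have unpacked precisely this, obtaining the same value $\beta=2c_1\alpha$, and your final rescaling step $\tilde u(\zeta)=u(s\zeta)$ is the standard way to convert the inclusion $u(\Delta_s)\subset Q_{(\delta,\alpha)}$ into the multiplicative factor; minor remark: $\Phi(\delta,0)=(\delta,0)$ exactly, not just modulo $O(\delta^2)$.
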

\begin{proof}
The proof is a quantitative repetition of the proof of  Proposition \ref{3thm}; 
we only notice that according to (\ref{3eqbord}) we have 
$c_2=\delta^{1-\alpha}$, implying  $\beta=2\alpha c_1$.
\end{proof}

\vspace{0.5cm}

Let $0<\alpha<\alpha'<1$ to be fixed later, independently of $\delta$.
For every sufficiently small $\delta$, we consider a smooth cut off function $\chi~: \R^4 \rightarrow \R$: 
$$\left\{
\begin{array}{lll}
\chi&\equiv& 1 \mbox{ } \mbox{ on } \mbox{ } Q_{(\delta,\alpha)},\\
\\
\chi&\equiv& 0 \mbox{ } \mbox{ on } \mbox{ } \R^4\setminus Q_{(\delta,\alpha')},\\
\end{array}
\right.$$
with $\alpha'<\alpha$. We point out that $\chi$ may be chosen such that 
\begin{equation}\label{3eqderiv}
\|d_z\chi\|\leq \frac{c}{\delta^{1-\alpha'}},
\end{equation}
for some positive constant $c$ independent of $\delta$.  
We consider now the following endomorphism of $\R^4$: 
$$q'(z):=\chi(z)q(z),$$
for $z \in Q_{(\delta,\alpha')}$, where
$$q(z):=(\Phi_*J(z)+J_{st})^{-1}(\Phi_*J(z)-J_{st}).$$
According to the fact that  $q(z)=O(|z_1+\rho_{2,2}z_2^2|)$ (see (\ref{3eqcomp})) 
and according to (\ref{3eqderiv}), the differential of  $q'$ is upper bounded on  
$Q_{(\delta,\alpha')}$, independently of $\delta$.   Moreover the $\displaystyle  dz_2\otimes \frac{\partial}{\partial z_1}$ and 
the $\displaystyle dz_2\otimes \frac{\partial}{\partial \overline{z_1}}$ components of the structure $\Phi_*J$ are $O(|z_1+\rho_{2,2}z_2^2||z_2|)$ 
by (\ref{3eqcomp}); this is also the case for the endomorphism $q'$. 
We define an almost complex structure on the whole space $\R^4$ by:
$$J'(z)=J_{st}(Id+q'(z))(Id-q'(z))^{-1},$$
which is well defined since $\|q'(z)\|<1$. 
It follows that the structure $J'$ is identically equal to $\Phi_*J$ in $Q_{(\delta,\alpha)}$ 
and  coincides with $J_{st}$ on  $\R^4\setminus Q_{(\delta,\alpha')}$ (see Figure 1). 
Notice also that since  $\chi\equiv d\chi\equiv0$ on  $\partial Q_{(\delta,\alpha')}$,  
$J'$ coincides with $J_{st}$ at first  order  on $\partial Q_{(\delta,\alpha')}$. 
Finally the  structure $J'$ satisfies:
$$J'=J_{st}+O(|z_1+\rho_{2,2}z_2^2|)$$
on $Q_{(\delta,\alpha')}.$ 
To fix the notations, the almost complex structure $J'$ admits the following matricial interpretation:
\begin{equation}
 J'_\C=\left(\begin{array}{cccc} 
a'_{1} & \overline{b'_{1}} & c'_1 & \overline{c'_2} \\
b'_{1} & \overline{a'_{1}} & c'_2 & \overline{c'_1} \\
0 & 0 & a'_{2} & \overline{b'_{2}} \\
0 & 0 &  b'_{2} & \overline{a'_{2}} \\ 
\end{array}\right). 
\end{equation}
with \begin{equation*}
\left\{
\begin{array}{lll}  
a'_{l}&=&i+O(\|z\|^2),\\
\\
b'_{l}&=&O(\|z\|),\\
\\
c'_l& =& O(|z_2|\|z\|), 
\end{array}
\right.
\end{equation*}
for $l=1,2$.

\vspace{0.5cm}

\bigskip
\begin{center}
\begin{picture}(0,0)%
\includegraphics{cutt1.pstex}%
\end{picture}%
\setlength{\unitlength}{1934sp}%
\begingroup\makeatletter\ifx\SetFigFont\undefined%
\gdef\SetFigFont#1#2#3#4#5{%
  \reset@font\fontsize{#1}{#2pt}%
  \fontfamily{#3}\fontseries{#4}\fontshape{#5}%
  \selectfont}%
\fi\endgroup%
\begin{picture}(10923,7426)(661,-6920)
\put(7876,-4411){\makebox(0,0)[lb]{\smash{{\SetFigFont{10}{12.0}{\familydefault}{\mddefault}{\updefault}{\color[rgb]{0,0,0}$p=(\delta,0)$}%
}}}}
\put(6826,-4036){\makebox(0,0)[lb]{\smash{{\SetFigFont{10}{12.0}{\familydefault}{\mddefault}{\updefault}{\color[rgb]{0,0,0}$0$}%
}}}}
\put(6151,-6061){\makebox(0,0)[lb]{\smash{{\SetFigFont{10}{12.0}{\familydefault}{\mddefault}{\updefault}{\color[rgb]{0,0,0}$Q_{(\delta,\alpha)}$}%
}}}}
\put(5551,-6811){\makebox(0,0)[lb]{\smash{{\SetFigFont{10}{12.0}{\familydefault}{\mddefault}{\updefault}{\color[rgb]{0,0,0}$Q_{(\delta,\alpha')}$}%
}}}}
\put(8626,-586){\makebox(0,0)[lb]{\smash{{\SetFigFont{10}{12.0}{\familydefault}{\mddefault}{\updefault}{\color[rgb]{0,0,0}$J$}%
}}}}
\put(3751,-1561){\makebox(0,0)[lb]{\smash{{\SetFigFont{10}{12.0}{\familydefault}{\mddefault}{\updefault}{\color[rgb]{0,0,0}$J'$}%
}}}}
\put(1876,-2911){\makebox(0,0)[lb]{\smash{{\SetFigFont{10}{12.0}{\familydefault}{\mddefault}{\updefault}{\color[rgb]{0,0,0}$J_{st}$}%
}}}}
\put(676,-5686){\makebox(0,0)[lb]{\smash{{\SetFigFont{10}{12.0}{\familydefault}{\mddefault}{\updefault}{\color[rgb]{0,0,0}$J'_{|\partial Q_{(\delta,\alpha')}}=J_{st}$ at order 1}%
}}}}
\put(3526,239){\makebox(0,0)[lb]{\smash{{\SetFigFont{10}{12.0}{\familydefault}{\mddefault}{\updefault}{\color[rgb]{0,0,0}$J_{|\partial Q_{(\delta,\alpha)})}=J'_{|\partial Q_{(\delta,\alpha')}}$ at order 1}%
}}}}
\put(10051,-5986){\makebox(0,0)[lb]{\smash{{\SetFigFont{10}{12.0}{\familydefault}{\mddefault}{\updefault}{\color[rgb]{0,0,0}$\Phi(D\cap U)$}%
}}}}
\end{picture}%

\end{center}
\begin{center}
Figure 1. Extension of the almost complex structure $J$.
\end{center}

\bigskip

Furthermore, according to the decreasing property of the Kobayashi metric we have for $p=(\delta,0)$:
\begin{equation}\label{3eqloc2}
 K_{(\Phi(D\cap U) \cap Q_{(\delta,\alpha)},\Phi_*J)}(p,v) = K_{(\Phi(D\cap U) \cap Q_{(\delta,\alpha)},J')}(p,v) \geq  K_{(\Phi(D\cap U) \cap Q_{(\delta,\alpha')},J')}(p,v).
\end{equation}
Finally, (\ref{3eqloc}) and (\ref{3eqloc2}) lead to: 
\begin{equation}\label{3eqloc3}
  K_{(D\cap U,J)}(p,v) \geq (1-2\delta^\beta) K_{(\Phi(D\cap U) \cap Q_{(\delta,\alpha')},J')}(p,v).
\end{equation}
This implies that in order to obtain the lower estimate of Theorem A it is sufficient to prove lower estimates for  
$K_{(\Phi(D\cap U) \cap Q_{(\delta,\alpha')},J')}(p,v)$.



\vspace{0,5cm}
We set $\Omega:=\Phi(D\cap U) \cap Q_{(\delta,\alpha')}$. 
Let $T_\delta$ be the translation of $\C^2$ defined by 
$$T_\delta(z_1,z_2):=(z_1-\delta,z_2),$$
and let $\varphi_{\delta}$ be a linear diffeomorphism  of $\R^4$ such that 
the direct image of $J'$ by $\varphi_\delta \circ T_\delta \circ \Phi$, denoted by 
 $J'^\delta$, satisfies:
\begin{equation}\label{3eqstr1}
J'^\delta(0)=J_{st}.
\end{equation}
To do this we consider a linear diffeomorphism such that its differential at the origin
transforms the basis $(e_1,(T_\delta\circ \Phi)_*J'(0)(e_1),e_3,(T_\delta\circ \Phi)_*J'(0)e_3)$
into the canonical basis $(e_1,e_2,e_3,e_4)$ of $\R^4$. According to (\ref{3eqharm}) and (\ref{3eqcomp}), we have 
$$(T_\delta \circ \Phi)_*J'(0)=\Phi_*J'(\delta,0)=J'(\delta,0).$$
This means that the endomorphism   $(T_\delta \circ \Phi)_*J'(0)$ is block diagonal. 
This and the fact that $J'(\delta,0)=J'_{st}+O(\delta)$  imply that the desired diffeomorphism is expressed by:
\begin{equation}\label{3eqnorm}
\varphi_{\delta}(z):=\left(z_1+O(\delta|z_1|),z_2+O(\delta|z_2|)\right),
\end{equation}
for $z \in T_\delta(\Omega)$, and that:
\begin{equation}\label{3eqcomp2}
(J'^\delta)_\C(z)=\left(\begin{array}{cccc} 
a'_{1,\delta}(z) 
& \overline{b'_{1,\delta}(z)}  & c'_{1,\delta}(z)& \overline{c'_{2,\delta}(z)} \\
b'_{1,\delta}(z)  & \overline{a'_{1,\delta}(z)} & c'_{2,\delta}(z)& \overline{c'_{1,\delta}(z)} \\
0&0 & a'_{2,\delta}(z)  & \overline{b'_{2,\delta}(z)} \\
0 & 0 & b'_{2,\delta}(z)  & \overline{a'_{2,\delta}(z)}  \\ 
\end{array}\right), 
\end{equation}
where 
\begin{equation*}
\left\{
\begin{array}{lll}  
a'_{k,\delta}(z)&:=&a'_k(\Phi^{-1}\circ T_\delta^{-1}\circ \varphi_\delta^{-1}(z))+O(\delta) \\
\\
b'_{k,\delta}(z)&:=&b'_k(\Phi^{-1}\circ T_\delta^{-1}\circ \varphi_\delta^{-1}(z))+O(\delta)\\
\\
c'_{k,\delta}(z)&:=& c'_k(T_\delta^{-1}\circ \varphi_\delta^{-1}(z))+O(\delta)
\end{array}
\right.
\end{equation*}
for  $k=1,2$. 
Furthermore we notice that the structure $J'^\delta$ is constant and equal to 
$J_{st}+O(\delta)$ on $\R^4\setminus (\varphi_\delta\circ T_\delta \circ (\Omega))$,


\vspace{0,5cm}

We consider now the following anisotropic dilation $\Lambda_\delta$ of $\C^2$~:
$$\Lambda_\delta(z_1,z_2):=\left(\frac{z_1}{z_1+2\delta},\frac{\sqrt{2\delta}z_2}{z_1+2\delta}\right).$$ 
Its inverse is given by:
\begin{equation}\label{3eqdil}
\Lambda_\delta^{-1}(z)=\left( 2\delta\frac{z_1}{1-z_1}, 
\sqrt{2\delta}\frac{z_2}{1-z_1}\right).
\end{equation}
Let 
$$\Psi_\delta:=\Lambda_\delta \circ \varphi_\delta \circ T_\delta.$$ 
We have the following matricial representation for
the complexification of the structure $\widetilde{J^\delta}:=(\Lambda_\delta)_*J^\delta$: 
\begin{equation}\label{3eqcomp3}
\left(\begin{array}{cccc} 
A'_{1,\delta}(z) 
& \overline{B'_{1,\delta}(z)}  & C'_{1,\delta}(z)& \overline{C'_{2,\delta}(z)} \\
B'_{1,\delta}(z)  & \overline{A'_{1,\delta}((z)} & 
C'_{2,\delta}(z)& \overline{C'_{1,\delta}(z)} \\
D'_{1,\delta}(z)& \overline{D'_{2,\delta}(z)}  & A'_{2,\delta}(z)  & 
\overline{B'_{2,\delta}(z)} \\
D'_{2,\delta}(z)& \overline{D'_{1,\delta}(z)} & 
B'_{2,\delta}(z)  & \overline{A'_{2,\delta}(z)}  \\ 
\end{array}\right), 
\end{equation}
with
\begin{equation*}
\left\{
\begin{array}{lll}  
A'_{1,\delta}(z)&:=&\displaystyle a'_{1,\delta}(\Lambda_\delta^{-1}(z))+
\frac{1}{\sqrt{2\delta }}z_2c'_{1,\delta}(\Lambda_\delta^{-1}(z))\\
\\
A'_{2,\delta}(z)&:=&\displaystyle a'_{2,\delta}(\Lambda_\delta^{-1}(z))-
\frac{1}{\sqrt{2\delta }}z_2c'_{1,\delta}(\Lambda_\delta^{-1}(z))\\
\\
B'_{1,\delta}(z)&:=&\displaystyle\frac{(1-\overline{z_1})^2}{(1-z_1)^2}b'_{1,\delta}( \Lambda_\delta^{-1}(z))
+\frac{1}{\sqrt{2\delta }}\frac{(1-\overline{z_1})^2z_2}{(1-z_1)^2}c'_{2,\delta}( \Lambda_\delta^{-1}(z))\\
\\
B'_{2,\delta}(z)&:=&\displaystyle \displaystyle\frac{1-\overline{z_1}}{1-z_1} b'_{2,\delta}( \Lambda_\delta^{-1}(z)) 
-\frac{1}{\sqrt{2\delta }}\frac{(1-\overline{z_1})\overline{z_2}}{1-z_1}c'_{2,\delta}( \Lambda_\delta^{-1}(z))\\
\\
C'_{1,\delta}(z)&:=&\displaystyle \frac{1}{\sqrt{2\delta }}(1-z_1)c'_{1,\delta}(\Lambda_\delta^{-1}(z))\\
\\
C'_{2,\delta}(z)&:=& \displaystyle\frac{1}{\sqrt{2\delta }}
\frac{(1-\overline{z_1})^2}{1-z_1}c'_{2,\delta}(\Lambda_\delta^{-1}(z))\\
\\
 D'_{1,\delta}(z)&:=&\displaystyle\frac{z_2}{1-z_1}(a'_{2,\delta}( \Lambda_\delta^{-1}(z)) -a'_{1,\delta}
( \Lambda_\delta^{-1}(z)) )-\frac{1}{\sqrt{2\delta }}\frac{z_2^2}{1-z_1}c'_{1,\delta}( \Lambda_\delta^{-1}(z))\\
\\
D'_{2,\delta}(z)&:=& \displaystyle\frac{1-\overline{z_1}}{(1-z_1)^2}(z_2b'_{2,\delta}(\Lambda_\delta^{-1}(z))
-\overline{z_2}b'_{1,\delta}( \Lambda_\delta^{-1}(z)))\\
&&\\
&&-\frac{1}{\sqrt{2\delta}}\frac{(1-\overline{z_1})|z_2|^2}{(1-z_1)^2}c'_{2,\delta}( \Lambda_\delta^{-1}(z)).
\end{array}
\right.
\end{equation*}
Direct computations lead to:
\begin{equation*}
\left\{
\begin{array}{lll}  
A'_{1,\delta}(z)&=& \displaystyle a'_1(\tilde{z_1}+\rho_{2,2}\tilde{z_2}^2,\tilde{z_2})+
\frac{1}{\sqrt{2\delta }}z_2O(|\tilde{z_2}||\tilde{z_1}+\rho_{2,2}\tilde{z_2}^2|)+O(\sqrt{\delta})\\   
&&\\
B'_{1,\delta}(z)&=& \displaystyle  \frac{(1-\overline{z_1})^2}{(1-z_1)^2}b'_1(\tilde{z_1}+\rho_{2,2}\tilde{z_2}^2,\tilde{z_2})
+\frac{1}{\sqrt{2\delta }}\frac{(1-\overline{z_1})^2}{1-z_1^2}z_2
O(|\tilde{z_2}||\tilde{z_1}+\rho_{2,2}\tilde{z_2}^2|)\\
&&\\
&&+O(\sqrt{\delta})\\
&&\\
C'_{1,\delta}(z)&=&\displaystyle  \frac{1}{\sqrt{2\delta }}(1-z_1)O(|\tilde{z_2}||\tilde{z_1}+\rho_{2,2}\tilde{z_2}^2|)    
+O(\sqrt{\delta})\\
&&\\
D'_{1,\delta}(z)&=&\displaystyle \frac{z_2}{1-z_1}[(a'_2 -a'_1) ( \tilde{z_1}+\rho_{2,2}\tilde{z_2}^2,\tilde{z_2})]+
\displaystyle \frac{1}{\sqrt{2\delta }}\frac{z_2^2}{1-z_1}O(|\tilde{z_2}||\tilde{z_1}+\rho_{2,2}\tilde{z_2}^2|)\\ 
&&\\
&&+O(\sqrt{\delta}).\\
\end{array}
\right.
\end{equation*}
where 
\begin{equation*}
\left\{
\begin{array}{lll}  
\tilde{z_1}&:=& \displaystyle 2\delta\frac{z_1}{1-z_1}+\delta+O\left(\delta^2\left|\frac{z_1}{1-z_1}\right|\right)\\
&&\\
\tilde{z_2}&:=&\displaystyle \sqrt{2\delta}\frac{z_2}{1-z_1}+O\left(\delta^{3/2}\left|\frac{z_2}{1-z_1}\right|\right).
\end{array}
\right.
\end{equation*}
Notice that:
\begin{equation*}
\left\{
\begin{array}{lll}  
\displaystyle\frac{\partial}{\partial z_1}\tilde{z_1}&:=& \displaystyle 2\delta\frac{1}{(1-z_1)^2}
+\frac{\partial}{\partial z_1}O\left(\delta^2\left|\frac{z_1}{1-z_1}\right|\right)\\
&&\\
\displaystyle \frac{\partial }{\partial z_1}\tilde{z_2}&:=&\displaystyle -\sqrt{2\delta}\frac{z_2}{(1-z_1)^2}+
\frac{\partial}{\partial z_1}O\left(\delta^{3/2}\left|\frac{z_2}{1-z_1}\right|\right).
\end{array}
\right.
\end{equation*}
The crucial step is to control $\|\widetilde{J'^\delta}-J_{st}\|_{\mathcal{C}^1(\overline{\Psi_\delta(\Omega)})}$
 by some positive power of  $\delta$. Working on a small neighborhood of the unit ball $\B$ (see next Lemma \ref{3lemma}), it is sufficient to prove that 
the differential of $\widetilde{J'^\delta}$ is controlled by some positive constant of $\delta$.
We first need to determine the behaviour of a point 
$z=(z_1,z_2) \in \Psi_\delta(\Omega)$ near the infinite point $(1,0)$. Let  $\omega=(\omega_1,\omega_2) \in \Omega$ be such that $\Psi_\delta(\omega)=z$; 
then: 
\begin{eqnarray*} 
z_1=\frac{\omega_1-\delta+O(\delta|\omega_1-\delta|)}
{\omega_1+\delta+O(\delta|\omega_1-\delta|)},\\
\end{eqnarray*}
where the two terms $O(\delta|\omega_1-\delta|)$ are equal, and so 
\begin{equation}\label{3eqboule1} 
\left|\frac{1}{1-z_1}\right|= \left|\frac{\omega_1+\delta+O(\delta|\omega_1-\delta|)}{2\delta}\right|\leq c_1\delta^{-\alpha'}.
\end{equation}
for some positive constant $c_1$ independent of $z$. Moreover there is a positive constant $c_2$ such that 
\begin{equation}\label{3eqboule2} 
|z_2|=\sqrt{2\delta}\left|\frac{\omega_2+O(\delta|\omega_2|)}
{\omega_1+\delta+O(\delta|\omega_1-\delta|)}\right|\leq c_2\delta^{\alpha'/2}.\\
\end{equation}
All the behaviours being equivalent, we focus for instance on 
the derivative $\frac{\partial} {\partial z_1}D'_{1,\delta}(z)$. In this computation 
we focus only on terms that play a crucial role:
\begin{eqnarray*}
\frac{\partial} {\partial z_1}D'_{1,\delta}(z)&=&\displaystyle -\frac{z_2}{(1-z_1)^2}
[(a'_2 -a'_1)( \tilde{z_1}+\rho_{2,2}\tilde{z_2}^2,\tilde{z_2})]+\\
&&\frac{z_2}{(1-z_1)}\left[\frac{\partial} {\partial z_1}(a'_2 -a'_1).
\left(2\delta\frac{1}{(1-z_1)^2}-4\rho_{2,2}\delta\frac{z_2^2}{(1-z_1)^3}\right)\right]+\\
& & \frac{z_2}{(1-z_1)}\left[\frac{\partial} {\partial z_2}(a'_2 -a'_1).\sqrt{2\delta}\frac{z_2}{(1-z_1)^2}
\right]+\\
&&\\
&&\frac{-1}{\sqrt{2\delta }}\frac{z_2^2}{(1-z_1)^2}O(|\tilde{z_2}||\tilde{z_1}+\rho_{2,2}\tilde{z_2}^2|)\\
&&\\
&& +
\frac{1}{\sqrt{2\delta }}\frac{z_2^2}{1-z_1}\frac{\partial}{\partial z_1}
O(|\tilde{z_2}||\tilde{z_1}+\rho_{2,2}\tilde{z_2}^2|)+R(z).\\
\end{eqnarray*}

According to (\ref{3eqboule1}), to (\ref{3eqboule2}) and to the fact that $(a_2'-a_1')(z)=O|z|$, it follows that for $\alpha'$ 
small enough 
$$\left|\frac{\partial} {\partial z_1}D'_{1,\delta}(z)\right|\leq c\delta^s$$
for  positive constants $c$ and $s$. 
By similar arguments on other derivatives, it follows  that there are positive constants, still denoted by $c$ and $s$ such that    
\begin{equation*}
\|d\widetilde{J'^\delta}\|_{\mathcal{C}^0(\overline{\Psi_\delta(\Omega)})}\leq c\delta^s.
\end{equation*}
In view of the next Lemma \ref{3lemma}, since $\Psi_\delta(\Omega)$  is bounded, this also proves that 
\begin{equation}\label{3eqstru1}
\|\widetilde{J'^\delta}-J_{st}\|_{\mathcal{C}^1(\overline{\Psi_\delta(\Omega)})}\leq c\delta^s.
\end{equation}

Moreover on $\B(0,2)\setminus \Psi_\delta(\Omega)$, by similar and easier computations we see that 
$\|\widetilde{J'^\delta}-J_{st}\|_{\mathcal{C}^1(\overline{\B(0,2)\setminus \Psi_\delta(\Omega)})}$ is also controlled by some positive constant 
of $\delta$. This finally implies the crucial control~: 
\begin{equation}\label{3eqstru}
\left\{
\begin{array}{lll}
\widetilde{J'^\delta}(0)&=& J_{st}, \\
&&\\ 
\|\widetilde{J'^\delta}-J_{st}\|_{\mathcal{C}^1(\overline{\B(0,2)})}&\leq& c\delta^s.
\end{array}
\right.
\end{equation}


\vspace{0,5cm}
In order to obtain estimates of the Kobayashi metric, we need to localize the domain 
$\Psi_\delta(\Omega)=\Psi_\delta(\Phi(D\cap U)\cap \Phi(Q_{(\delta,\alpha')}))$ between two balls. 
This technical result is essentially due to D.Ma \cite{ma}. 
\begin{lem}\label{3lemma}
There exists a positive constant $C$ such that:  
$$\B\left(0,e^{-C\delta^{\alpha'}}\right) \subset \Psi_\delta(\Omega) 
\subset \B\left(0,e^{C\delta^{\alpha'}}\right).$$
\end{lem}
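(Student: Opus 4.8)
The plan is to track how the composite map $\Psi_\delta = \Lambda_\delta \circ \varphi_\delta \circ T_\delta$ transforms the boundary geometry of $\Omega = \Phi(D\cap U)\cap Q_{(\delta,\alpha')}$, and to show that the image is squeezed between two spheres whose radii differ from $1$ by an amount controlled by $\delta^{\alpha'}$. The starting point is the defining function of $\Phi(D\cap U)$ near the origin. After removing the harmonic term via $\Phi$ and normalizing $\rho_{2,\overline 2}=1$, the boundary $\partial(\Phi(D\cap U))$ is, up to higher order, the hypersurface $\{-2\Re e\, z_1 + |z_2|^2 + (\text{mixed real-quadratic terms}) + O(\|z\|^3)=0\}$. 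First I would apply $T_\delta$ to recenter at $p=(\delta,0)$ and $\varphi_\delta$ (which by \eqref{3eqnorm} is an $O(\delta)$-perturbation of the identity) to straighten the structure; these maps distort distances only by factors $1+O(\delta)$, so they contribute harmlessly to the final $\delta^{\alpha'}$-estimate.

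The heart of the matter is the anisotropic dilation $\Lambda_\delta$. The key observation is that $\Lambda_\delta$ is designed precisely so that the \emph{model} paraboloid $\{-2\Re e\,w_1+|w_2|^2<0\}$ (translated so that $p$ sits at $w=(\delta,0)$, i.e. $\{-2\Re e\,w_1 + 2\delta + |w_2|^2 <0\}$ after $T_\delta$) is mapped exactly onto the unit ball $\B$, with $p\mapsto 0$. I would verify this by a direct substitution using the inverse \eqref{3eqdil}: writing $w_1 = 2\delta z_1/(1-z_1)+\delta$ and $w_2=\sqrt{2\delta}\,z_2/(1-z_1)$, the model inequality becomes $|z_1|^2+|z_2|^2<1$ after clearing the common denominator $|1-z_1|^2$. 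Thus for the pure second-order model the inclusion is an equality with radius exactly $1$; all deviation from a round ball comes from (i) the cubic and higher remainder terms $O(\|z\|^3)$ in $\rho$, and (ii) the lateral truncation by $\partial Q_{(\delta,\alpha')}$.

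Next I would estimate these two perturbations. On $\Psi_\delta(\Omega)$ the coordinates satisfy the size bounds \eqref{3eqboule1} and \eqref{3eqboule2}, namely $|1-z_1|^{-1}\le c_1\delta^{-\alpha'}$ and $|z_2|\le c_2\delta^{\alpha'/2}$. Feeding these into the remainder $O(\|w\|^3)$ after dilation: a cubic term $O(\|w\|^3)$ in $w$ becomes, after dividing by the quadratic normalization, a term of relative size $O(\|w\|) = O(\delta^{\alpha'/2})$ because $\|w\| = \|\Lambda_\delta^{-1}(z)\| \lesssim \delta \cdot \delta^{-\alpha'} + \sqrt{\delta}\,\delta^{\alpha'/2}\delta^{-\alpha'}$, which is a positive power of $\delta$ for $\alpha'<1$. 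Hence the true boundary $\Psi_\delta(\partial\Omega \cap \partial(\Phi(D\cap U)))$ lies within a spherical shell $\{1-c\delta^{\alpha'} < \|z\| < 1+c\delta^{\alpha'}\}$, and since $e^{\pm C\delta^{\alpha'}} = 1\pm C\delta^{\alpha'}+O(\delta^{2\alpha'})$, this is exactly the asserted sandwich. The lateral truncation by $Q_{(\delta,\alpha')}$ pushes the cut-off faces out to $|z_1|$-radius $\delta^{1-\alpha'}$ and $|z_2|$-radius $c\delta^{(1-\alpha')/2}$ in the original scale, which after $\Lambda_\delta$ lands strictly outside the unit sphere up to the same $\delta^{\alpha'}$-order, so it does not cut into $\B(0,e^{-C\delta^{\alpha'}})$.

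The step I expect to be the main obstacle is controlling the remainder terms \emph{uniformly} after the dilation $\Lambda_\delta$, because $\Lambda_\delta$ is highly anisotropic (scaling $z_1$ by $\sim\delta^{-1}$ and $z_2$ by $\sim\delta^{-1/2}$) and amplifies errors dramatically; one must check that the cubic-and-higher terms of $\rho$ shrink faster than $\Lambda_\delta$ expands them, which is exactly where the precise exponents $\alpha'$ and the bounds \eqref{3eqboule1}--\eqref{3eqboule2} must be balanced. The bookkeeping is delicate but parallels D.~Ma's computation in \cite{ma}; I would organize it by estimating the defining function $\rho\circ\Phi^{-1}\circ T_\delta^{-1}\circ\varphi_\delta^{-1}\circ\Lambda_\delta^{-1}$ directly on $\partial\Psi_\delta(\Omega)$ and showing $\bigl|\,\|z\|^2-1\,\bigr|\le c\delta^{\alpha'}$ there, from which the two-sided inclusion between the balls of radii $e^{\mp C\delta^{\alpha'}}$ follows immediately.
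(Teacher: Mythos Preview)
Your plan is correct and follows essentially the same route as the paper: compute $\|\Psi_\delta(z)\|^2-1$ on $\partial\Omega$, use that $\Lambda_\delta$ sends the model paraboloid exactly to the unit ball, split $\partial\Omega$ into the piece on $\partial(\Phi(D\cap U))$ and the piece on $\partial Q_{(\delta,\alpha')}$, and bound the remainder on each using the polydisc size constraints. The paper carries this out by writing $L(z)=|z_1+\delta+O(\delta|z_1-\delta|)|^2(\|\Psi_\delta(z)\|^2-1)=2\delta M(z)+\delta^2 O(\cdot)$ with $M(z)=-2\Re e\,z_1+|z_2|^2$, obtaining exponents $\delta^{\alpha'}$ on the polydisc face and $\delta^{(1-3\alpha')/2}$ on the hypersurface face; your exponent bookkeeping is a bit loose (you quote $\delta^{\alpha'}$ uniformly), but the strategy and the final conclusion coincide.
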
 


\begin{proof}[Proof of Lemma  \ref{3lemma}]
We have:
\begin{equation}
\Psi_\delta(z)=\left(\frac{z_1-\delta+O(\delta|z_1-\delta|)}
{z_1+\delta+O(\delta|z_1-\delta|)},\sqrt{2\delta}\frac{z_2+O(\delta|z_2|)}
{z_1+\delta+O(\delta|z_1-\delta|)}\right).
\end{equation}
Consider the following expression: 
\begin{eqnarray*}
L(z)&:=&|z_1+\delta+O(\delta|z_1-\delta|)|^2(\|\Psi_\delta(z)\|^2-1)\\
&=& |z_1-\delta+O(\delta|z_1-\delta|)|^2+
2\delta|z_2+O(\delta|z_2|)|^2\\
& & -|z_1+\delta+O(\delta|z_1-\delta|)|^2.
\end{eqnarray*}
Since $O(\delta|z_1-\delta|$) in the first and last terms of the right hand side of the previous equality  
are equal,  this leads to 
\begin{eqnarray*}
L(z)&=& 2\delta M(z)+ \delta^2O(|z_1|)+\delta^2O(|z_2|^2),
\end{eqnarray*}
where 
\begin{eqnarray*}
M(z)&:=& -2\Re e z_1+|z_2|^2.
\end{eqnarray*}
Let  $z \in \Omega =\Phi(D\cap U)\cap Q_{(\delta,\alpha')}$. For $\delta$ small enough, we have:
\begin{eqnarray}\label{3eqre}
|z_1+\delta+O(\delta|z_1-\delta|)|^2&\geq & |z_1|^2+\delta^2+\delta^2O(|z_1|+\delta)+
\delta O(|z_1|^2+\delta|z_1|)+\nonumber\\
& &\delta^2O(|z_1|+\delta)^2+2\delta\Re e z_1\nonumber\\
& \geq & |z_1|^2+\delta^2+\delta O(|z_1|^2)+\delta^2O(|z_1|)+O(\delta^3)+2\delta\Re e z_1\nonumber\\
& \geq & \frac{3}{4}(|z_1|^2+\delta^2)+2\delta\Re e z_1.
\end{eqnarray}
Moreover 
$$2\Re e z_1 > 2\Re e \rho_{1,1} z_1^2+ 2\Re e \rho_{1,2} z_1z_2+\sum \rho_{j,\overline{k}} z_j\overline{z_k}+O(\|z\|^3).$$
Since the defining function $\rho$ is strictly $J$-plurisubharmonic, we know that, for $z$ small enough, 
$\sum \rho_{j,\overline{k}} z_j\overline{z_k}+O(\|z\|^3)$ is nonnegative.
Hence~:
$$2\Re e z_1\geq 2\Re e \rho_{1,1} z_1^2+ 2\Re e \rho_{1,2} z_1z_2$$
for $z$ sufficiently small and so there is a positive constant $C_1$ such that:
\begin{equation}\label{3eqre2}
2\Re e z_1\geq -C_1|z_1|\|z\|.
\end{equation}
Finally, (\ref{3eqre}) and (\ref{3eqre2}) lead to: 
$$|z_1+\delta+O(\delta|z_1-\delta|)|^2 \geq \frac{1}{2}(|z_1|^2+\delta^2)$$
for $z$ small enough.
Hence we have: 
\begin{equation}\label{3eqboule}
|\|\Psi_\delta(z)\|^2-1|=\frac{|L(z)|}{|z_1+\delta+O(\delta|z_1-\delta|)|^2}\leq 
\frac{4\delta |M(z)|+\delta^2O(|z_1|)+\delta^2O(|z_2|^2)}{|z_1|^2+\delta^2}.
\end{equation}

The boundary of $\Omega$ is equal to $V_1\cup V_2$ (see Figure 2), where:
$$\left\{
\begin{array}{lll}
V_1&:=&\Phi(\overline{D\cap U})\cap \partial Q_{(\delta,\alpha')},\\
\\
V_2&:=&\Phi(\partial (D\cap U))\cap Q_{(\delta,\alpha')}.\\
\end{array}
\right.$$
\bigskip
\begin{center}
\begin{picture}(0,0)%
\includegraphics{bord.pstex}%
\end{picture}%
\setlength{\unitlength}{1934sp}%
\begingroup\makeatletter\ifx\SetFigFont\undefined%
\gdef\SetFigFont#1#2#3#4#5{%
  \reset@font\fontsize{#1}{#2pt}%
  \fontfamily{#3}\fontseries{#4}\fontshape{#5}%
  \selectfont}%
\fi\endgroup%
\begin{picture}(7077,6240)(4711,-6595)
\put(7876,-4411){\makebox(0,0)[lb]{\smash{{\SetFigFont{10}{12.0}{\familydefault}{\mddefault}{\updefault}{\color[rgb]{0,0,0}$p=(\delta,0)$}%
}}}}
\put(6826,-4036){\makebox(0,0)[lb]{\smash{{\SetFigFont{10}{12.0}{\familydefault}{\mddefault}{\updefault}{\color[rgb]{0,0,0}$0$}%
}}}}
\put(10051,-5986){\makebox(0,0)[lb]{\smash{{\SetFigFont{10}{12.0}{\familydefault}{\mddefault}{\updefault}{\color[rgb]{0,0,0}$\Phi(D\cap U)$}%
}}}}
\put(6151,-6061){\makebox(0,0)[lb]{\smash{{\SetFigFont{10}{12.0}{\familydefault}{\mddefault}{\updefault}{\color[rgb]{0,0,0}$Q_{(\delta,\alpha')}$}%
}}}}
\put(4726,-886){\makebox(0,0)[lb]{\smash{{\SetFigFont{9}{10.8}{\familydefault}{\mddefault}{\updefault}{\color[rgb]{0,0,0}$V_2$}%
}}}}
\put(7351,-586){\makebox(0,0)[lb]{\smash{{\SetFigFont{9}{10.8}{\familydefault}{\mddefault}{\updefault}{\color[rgb]{0,0,0}$V_1$}%
}}}}
\end{picture}%

\end{center}
\begin{center}
Figure 2. Boundary of $\Omega$.
\end{center}
\bigskip

Let $z \in V_1$. 
According  (\ref{3eqboule}) we have: 
\begin{eqnarray*}
|\|\Psi_\delta(z)\|^2-1| &\leq &\frac{4\delta |M(z)|+\delta^2O(|z_1|)+\delta^2O(|z_2|^2)}{|z_1|^2+\delta^2}\\
&&\\
&\leq &\frac{4\delta |z_1|+4\delta|z_2|^2+C_2\delta^{3-\alpha'}}{\delta^{2-2\alpha'}+\delta^2}\\
&&\\
&\leq&\frac{C_3\delta^{2-\alpha'}}{\delta^{2-2\alpha'}+\delta^2} \\
&&\\
&\leq &C_4\delta^{\alpha'}\\
\end{eqnarray*}
for some positive constants $C_1$, $C_2$, $C_3$ and  $C_4$, and for $\alpha'$ small enough.

If  $z \in V_2$, then 
$$ M(z)=-2\Re e z_1+|z_2|^2=O(|z_2|^3+|z_1|\|z\|)$$
and so there is a positive constant $C_5$ such that:
\begin{equation}\label{3eqcalc}
M(z)\leq C_5 \delta^{\frac{3}{2}(1-\alpha')}.
\end{equation} 
We finally obtain from (\ref{3eqboule}) and (\ref{3eqcalc}):
\begin{eqnarray*}
|\|\Psi_\delta(z)\|^2-1|&\leq&2C_5\frac{\delta^{\frac{5-3\alpha'}{2}}}{|z_1|^2+\delta^2} 
+ C_2\frac{\delta^{3-\alpha'}}{|z_1|^2+\delta^2}\\
&&\\
&\leq& 2C_5\delta^{\frac{1-3\alpha'}{2}} +C_2\delta^{1-\alpha'}\\
&&\\
&\leq &(2C_5+C_2)\delta^{\frac{1-3\alpha'}{2}}. 
\end{eqnarray*}
This proves that:
$$\B\left(0,1-C\delta^{\alpha'}\right) 
\subset \Psi_\delta(\Omega) 
\subset \B\left(0,1+C\delta^{\alpha'}\right),$$
for some positive constant $C$.
\end{proof}

Lemma \ref{3lemma} provides for every $v\in T_0\C^2$: 
\begin{equation}\label{3eqkob1}
K_{\left(\B(0,e^{C\delta^{\alpha'}}),\widetilde{J'^\delta}\right)}(0,v)\leq K_{\left(\Psi_\delta(\Omega),\widetilde{J'^\delta}\right)}(0,v)\leq
K_{\left(\B(0,e^{-C\delta^{\alpha'}}),\widetilde{J'^\delta}\right)}(0,v).
\end{equation}

\vspace{0,5cm}

\subsection*{\bf \Large Lower estimate}

In order to give a  lower estimate of $K_{\left(\B(0,e^{C\delta^{\alpha'}}),\widetilde{J'^\delta}\right)}(0,v)$ 
we need the following proposition:
\begin{prop}\label{3thm1}
Let $\widetilde{J}$ be an almost complex structure defined on $\B\subseteq \C^2$  such that $\widetilde{J}(0)=J_{st}$. 
There exist  positive constants $\varepsilon$ and $A_\varepsilon=O(\varepsilon)$ such that if 
$\|\widetilde{J}-J_{st}\|_{\mathcal{C}^1(\B)}\leq \varepsilon$ then we have: 
\begin{equation}\label{3ee1}
K_{(\B,\widetilde{J})}(0,v)\geq \exp \left(-\frac{A_\varepsilon}{2}\right)\|v\|.
\end{equation}
\end{prop}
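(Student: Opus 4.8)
The plan is to exploit the $\widetilde{J}$-plurisubharmonic barrier supplied by Chirka's Lemma \ref{3lemchir}, together with the subharmonicity of its pullback under pseudoholomorphic discs. First I would fix the function
\[
\psi(z) := \log\|z\|^2 + A_\varepsilon\|z\|,
\]
which by Lemma \ref{3lemchir} is $\widetilde{J}$-plurisubharmonic on $\B$ once $\varepsilon$ is small, with $A_\varepsilon = O(\varepsilon)$. The idea is to convert the defining infimum of $K_{(\B,\widetilde{J})}$ into an upper bound on the derivative at $0$ of every competing disc. So let $u : \Delta \to \B$ be a $\widetilde{J}$-holomorphic disc with $u(0)=0$ and $d_0 u(\partx)=w$; the goal is to prove $\|w\|\leq e^{A_\varepsilon/2}$.

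Since $\psi$ is $\widetilde{J}$-plurisubharmonic, Proposition \ref{iproplevi2} gives that $\psi\circ u$ is subharmonic on $\Delta$. I would then introduce
\[
\phi(\zeta) := \psi(u(\zeta)) - \log|\zeta|^2 = \log\frac{\|u(\zeta)\|^2}{|\zeta|^2} + A_\varepsilon\|u(\zeta)\|,
\]
which is subharmonic on the punctured disc $\Delta\setminus\{0\}$. Because $\widetilde{J}(0)=J_{st}$, the $\widetilde{J}$-holomorphy equation \eqref{eqholo} reduces at the origin to the standard Cauchy--Riemann equation, so that $u(\zeta)=\zeta w + o(|\zeta|)$; consequently $\phi(\zeta)\to \log\|w\|^2$ as $\zeta\to 0$. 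In particular $\phi$ is bounded near $0$, the isolated singularity is removable, and $\phi$ extends to a subharmonic function on $\Delta$ with $\phi(0)=\log\|w\|^2$.

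For the boundary estimate, as $|\zeta|\to 1$ one has $\|u(\zeta)\|<1$, hence $\log\|u(\zeta)\|^2\leq 0$, $A_\varepsilon\|u(\zeta)\|\leq A_\varepsilon$ and $-\log|\zeta|^2\to 0$, giving $\limsup_{|\zeta|\to 1}\phi(\zeta)\leq A_\varepsilon$. The maximum principle for subharmonic functions then yields $\log\|w\|^2 = \phi(0)\leq A_\varepsilon$, that is $\|w\|\leq e^{A_\varepsilon/2}$. Applying this to any disc with $d_0 u(\partx)=rv$ gives $r\|v\|\leq e^{A_\varepsilon/2}$, so $1/r \geq e^{-A_\varepsilon/2}\|v\|$; taking the infimum over all competing discs produces $K_{(\B,\widetilde{J})}(0,v)\geq \exp(-A_\varepsilon/2)\|v\|$, which is the asserted inequality \eqref{3ee1}.

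I expect the main technical obstacle to be the analysis at the center of the disc, namely justifying that the singularity of $\phi$ at $\zeta=0$ is removable and that $\phi(0)=\log\|w\|^2$. This rests on the precise first-order expansion $u(\zeta)=\zeta w + o(|\zeta|)$, which follows from $\widetilde{J}(0)=J_{st}$ and the smoothness of pseudoholomorphic discs, combined with the removable-singularity principle for subharmonic functions that are bounded above near an isolated point. Everything else is a direct application of the maximum principle and the barrier $\psi$.
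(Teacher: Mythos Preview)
Your proposal is correct and follows essentially the same route as the paper: both invoke Chirka's barrier $\log\|z\|^2+A_\varepsilon\|z\|$ from Lemma~\ref{3lemchir}, pull it back along a competing $\widetilde{J}$-holomorphic disc centered at $0$, compensate by $-\log|\zeta|^2$, remove the singularity at the origin via the first-order expansion of $u$, and conclude with the maximum principle. The only cosmetic difference is that the paper works with the exponentiated function $\Psi=\|z\|^2e^{A_\varepsilon\|z\|}$ and the quotient $\Psi(u(\zeta))/|\zeta|^2$, whereas you work additively with its logarithm; your use of $\widetilde{J}(0)=J_{st}$ to obtain $u(\zeta)=\zeta w+o(|\zeta|)$ and hence the exact value $\phi(0)=\log\|w\|^2$ is in fact slightly cleaner than the paper's two-step argument (an upper bound via $\|I+\widetilde{J}\|$ for finiteness, then a lower bound along $\zeta_2=0$).
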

\begin{proof}[Proof of Proposition \ref{3thm1}]
Due to Lemma \ref{3lemchir}, there exist positive  constants $\varepsilon$ and  $A_\varepsilon=O(\varepsilon)$ such that the 
function  ${\rm log} \|z\|^2+A_\varepsilon\|z\|$ is $\widetilde{J}$-plurisubharmonic on $\B$ if 
$\|\widetilde{J}-J_{st}\|_{\mathcal{C}^{1}(\B)}\leq \varepsilon$. Consider the function $\Psi$ defined by: 
$$\Psi:= \|z\|^2e^{A_\varepsilon\|z\|}.$$

Let $u~: \Delta \rightarrow \B$ be a $\widetilde{J}$-holomorphic disc such that $u(0)=0$ and
$d_0u (\partial /\partial x)= rv$ where $v \in T_q \C^2$ and $r>0$. For $\zeta$ sufficiently close to 0 we have
$$
u(\zeta) = q + d_0u(\zeta) +
\mathcal O(|\zeta|^2).
$$
Setting $\zeta= \zeta_1+i\zeta_2$ and using
the $\widetilde{J}$-holomorphy condition $d_0u\circ J_{st} = \widetilde{J} \circ
d_0u$, we may write:
$$d_0u(\zeta) = \zeta_1 d_0u\left(\frac{\partial}{\partial x}\right) + 
\zeta_2 \widetilde{J}\left(d_0u\left(\frac{\partial}{\partial x}\right)\right).$$
This implies
\begin{equation}\label{3eqcalc2}
|d_0u(\zeta)| \leq |\zeta| \|I+\widetilde{J}\| \left\|d_0u\left(\frac{\partial}{\partial x}\right)\right\|.
\end{equation}
 
We now consider the following function
$$\phi(\zeta):= \frac{\Psi(u(\zeta))}{|\zeta|^2}=\frac{\|u(\zeta)\|^2}{|\zeta|^2} \exp(A_\varepsilon|u(\zeta)|) ,$$
which is subharmonic on $\Delta \backslash \{0\}$ since $\log \phi$ is subharmonic. 
According to (\ref{3eqcalc2}) \\ 
$\limsup_{\zeta \rightarrow 0}\phi(\zeta) $ is finite. 
Moreover setting $\zeta_2=0$ we have:
$$ \limsup_{\zeta \rightarrow 0}\phi(\zeta) \geq \left\|d_0u\left(\frac{\partial}{\partial x}\right)\right\|^2.
$$
Applying the maximum principle to a subharmonic extension of $\phi$
on $\Delta$ we obtain the inequality:
$$
\left\|d_0u\left(\frac{\partial}{ \partial x}\right)\right\|^2 \leq \exp A_\varepsilon.
$$

Hence, by definition of the Kobayashi infinitesimal metric, 
we obtain for every $q \in D \cap V$, $v \in T_q M$:
\begin{equation}\label{3localhyp}
K_{(D,\widetilde{J})}(q,v) \geq  \exp\left(-\frac{A_\varepsilon}{2}\right)\|v\|.
\end{equation}

This gives the desired estimate (\ref{3ee1}).
\end{proof}

In order to  apply Proposition \ref{3thm1} to the structure $\widetilde{J'^\delta}$, it is necessary to dilate 
isotropically the ball $\B(0,e^{C\delta^{\alpha'}})$ to the unit ball $\B$. So consider the dilation of $\C^2$: 
$$\Gamma(z)=e^{-C\delta^{\alpha'}}z.$$
\begin{equation}\label{3eqkob3}
K_{\left(\B(0,e^{C\delta^{\alpha'}}),\widetilde{J'^\delta}\right)}(0,v)=
e^{-C\delta^{\alpha'}}K_{\left(\B,\Gamma_*\widetilde{J'^\delta}\right)}(0,v).
\end{equation}
According to (\ref{3eqkob1}) we obtain: 
\begin{equation}\label{3eqkob2}
e^{-C\delta^{\alpha'}}K_{\left(\B,\Gamma_*\widetilde{J'^\delta}\right)}(0,v)\leq 
K_{\left(\Psi_\delta(\Omega),\widetilde{J'^\delta}\right)}(0,v).
\end{equation}
Then applying Proposition \ref{3thm1} to the structure $\Gamma_*\widetilde{J'^\delta}=\widetilde{J'^\delta}(e^{C\delta^{\alpha'}}.)$ 
and to $\varepsilon=c\delta^s$ (see (\ref{3eqstru})) provides the existence of a positive constant $C_1$ such that: 
\begin{equation}\label{3ee2}
K_{\left(\B,\Gamma_*\widetilde{J'^\delta}\right)}(0,v)\geq e^{-C_1\delta^s}\|v\|.
\end{equation} 
Moreover 
\begin{equation}\label{3eqkob4}
K_{\left(\Omega,J' \right)}((\delta,0),v)=
K_{\left(\Psi_\delta(\Omega),\widetilde{J'^\delta}\right)}(0,d_{(\delta,0)}\Psi_\delta (v)),
\end{equation}
where 
\begin{eqnarray*}
\displaystyle d_{(\delta,0)}\Psi_\delta (v)&=&\displaystyle d_0\Lambda_\delta \circ d_0\varphi_\delta \circ d_{(\delta,0)}T_\delta(v)\\
&&\\
&=& \displaystyle \left(\frac{1}{2\delta}(v_1+O(\delta)v_1),\frac{1}{\sqrt{2\delta}}(v_2+O(\delta)v_2)\right).
\end{eqnarray*}
According to (\ref{3eqloc3}), (\ref{3ee2}), (\ref{3eqkob2}) and (\ref{3eqkob4}), we finally obtain:  
\begin{equation}\label{3eqfinalement}
K_{\left(D,J\right)}(p,v)\geq  e^{-C_2\delta^{\beta''}}\left(\frac{|v_1|^2}{4\delta^2}+
\frac{|v_2|^2}{2\delta}\right)^{\frac{1}{2}},
\end{equation}
for some positive constant $C_2$ and $\beta''$.

\vspace{0.5cm}

\subsection*{\bf \Large Upper estimate}

Now, we want to prove the existence of a positive constant $C_3$ such that
\begin{equation*}
K_{\left(D,J\right)}(p,v)\leq  e^{C_3\delta^{\alpha'}}\left(\frac{|v_1|^2}{4\delta^2}+
\frac{|v_2|^2}{2\delta}\right)^{\frac{1}{2}}.\\
\end{equation*} 
According  to the decreasing  property of the Kobayashi metric it is sufficient to give an upper estimate for 
$K_{(\Phi(D\cap U)\cap Q_{(\delta,\alpha)},J)}(p,v)$.  
Moreover, due to  (\ref{3eqkob1}) and (\ref{3eqkob4}) it is sufficient to prove:
\begin{equation}\label{3ee4}
K_{\left(\B(0,e^{-C\delta^{\alpha'}}),\widetilde{J^\delta}\right)}(0,v)\leq e^{C_4\delta^{\alpha'}}\|v\|.
\end{equation} 
In that purpose we need to deform quantitatively a standard holomorphic disc  contained in the ball 
$\B(0,e^{-C\delta^{\alpha'}})$ into a 
$\widetilde{J^\delta}$-holomorphic disc, controlling the size of the new disc, and 
consequently its derivative at the origin. As previously by dilating isotropically the 
ball  $\B(0,e^{-C\delta^{\alpha'}})$ into the unit ball $\B$, 
 we may suppose that we work on the unit ball endowed with  $\widetilde{J^\delta}$ satisfying (\ref{3eqstru}).

We define for a  map  $g$ with values in a complex
vector space, continuous on $\overline \Delta$, and for $z\in \Delta$ the {\it Cauchy-Green operator} by:
$$T_{CG}(g)(z):=\frac{1}{\pi} \int_\Delta\frac{g(\zeta)}{z-\zeta}dxdy.$$
We consider now the operator $\Phi_{\widetilde{J^\delta}}$ from
$\mathcal{C}^{1,r}(\overline \Delta,\B(0,2))$ into $\mathcal{C}^{1,r}(\overline \Delta,\R^4)$ by:
$$
\Phi_{\widetilde{J^\delta}}(u):=\left(Id-T_{CG}q_{\widetilde{J^\delta}}(u)\frac{\partial}{ \partial z}\right)u,
$$
which is well defined  since $\widetilde{J^\delta}$ satisfying (\ref{3eqstru}).
Let  $u~: \Delta \rightarrow \B$  be a $\widetilde{J^\delta}$-holomorphic disc in $\mathcal{C}^{1,r}(\overline \Delta,\B)$. 
According to the continuity of the Cauchy-Green operator from $\mathcal{C}^r(\overline \Delta,\R^4)$ into 
$\mathcal{C}^{1,r}(\overline \Delta,\R^4)$ and since $\widetilde{J^\delta}$ satisfies (\ref{3eqstru}), we get: 
\begin{eqnarray*}
\left\|T_{CG}q_{\widetilde{J^\delta}}(u)\frac{\partial}{ \partial z}u\right\|_{\mathcal{C}^{1,r}(\overline{\Delta})}&\leq& 
c\left\|q_{\widetilde{J^\delta}}(u)\frac{\partial}{ \partial z}u\right\|_{\mathcal{C}^{r}(\overline{\Delta})} \\
&&\\
&\leq &c\left\|q_{\widetilde{J^\delta}}\right\|_{\mathcal{C}^1(\overline{\B})}\|u\|_{\mathcal{C}^{1,r}(\overline{\Delta})}\\
&&\\
&\leq &c'\left\|\widetilde{J^\delta}-J_{st}\right\|_{\mathcal{C}^1(\overline{\B})}\|u\|_{\mathcal{C}^{1,r}(\overline{\Delta})}\\
&&\\
&\leq &c''\delta^s\|u\|_{\mathcal{C}^{1,r}(\overline{\Delta})}
\end{eqnarray*}
for some positive constants $c$, $c'$ and $c''$. 
Hence 
\begin{equation}\label{3eqphidef}
(1-c''\delta^{s})\|u\|_{\mathcal{C}^{1,r}(\overline{\Delta})} \leq 
\left\|\Phi_{\widetilde{J^\delta}}(u)\right\|_{\mathcal{C}^{1,r}(\overline{\Delta})}
\leq (1+c''\delta^{s})\|u\|_{\mathcal{C}^{1,r}(\overline{\Delta})} 
\end{equation}
for any $\widetilde{J^\delta}$-holomorphic disc $u~: \Delta \rightarrow \B$. 
This implies that the map $\Phi_{\widetilde{J^\delta}}$ is a $\mathcal{C}^1$ diffeomorphism  
 from $\mathcal{C}^{1,r}(\overline{\Delta},\B)$ onto $\Phi_{\widetilde{J^\delta}}(\mathcal{C}^{1,r}(\overline{\Delta},\B))$.
Furthermore the following property is classical: the disc   $u$ is $\widetilde{J^\delta}$-holomorphic if and only if 
$\Phi_{\widetilde{J^\delta}}(u)$ is $J_{st}$-holomorphic. According to (\ref{3eqphidef}),
there exists a positive constant $c_3$ such that for $w \in \R^{4}$ with $\|w\|= 1-c_3\delta^{s}$, the 
map $h_w~: \Delta \rightarrow \B(0,1-c_3\delta^s)$  defined by  $h_w(\zeta)=\zeta w$ belongs to 
$\Phi_{\widetilde{J^\delta}}(\mathcal{C}^{1,r}(\overline{\Delta},\B))$. In particular, 
the map $\Phi_{\widetilde{J^\delta}}^{-1}(h_w)$ is a $\widetilde{J^\delta}$-holomorphic disc from $\Delta$ 
to the unit ball $\B$.




Consider now $w \in \R^{4}$ such that $\|w\|= 1-c_3\delta^{s}$, and  $h_w$ the associated standard holomorphic disc. 
Let us estimate the derivative of the $\widetilde{J^\delta}$-holomorphic disc 
$u:=\Phi_{\widetilde{J^\delta}}^{-1}(h_w)$  at the origin:   
\begin{eqnarray}\label{3eqcald}
w&=& \frac{\partial h}{\partial x}(0)\nonumber \\
&=& \frac{\partial}{\partial x}\left(\Phi_{\widetilde{J^\delta}}(u)\right)(0) \nonumber\\
&&\nonumber\\
&=& \frac{\partial}{\partial x}u(0)+ \frac{\partial}{\partial x} T_{CG}q_{\widetilde{J^\delta}}(u)\frac{\partial u}{ \partial z}\nonumber\\
&&\nonumber\\
&=& \frac{\partial}{\partial x}u(0)+ T_{CZ}\left(q_{\widetilde{J^\delta}}(u)\frac{\partial u}{ \partial z}\right)(0)
\end{eqnarray}
where $T_{CZ}$ denotes the {\it Calderon-Zygmund} operator. This is defined by: 
$$T_{CZ}(g)(z):=\frac{1}{\pi} \int_\Delta\frac{g(\zeta)}{(z-\zeta)^2}dxdy,$$
for a  map  $g$ with values in a complex vector space, continuous on $\overline \Delta$ and  for $z\in \Delta$, with the integral in the sense of principal value. 
Since  $T_{CZ}$ is a continuous operator  from $\mathcal{C}^r(\overline{\Delta},\R^4)$ into $\mathcal{C}^r(\overline{\Delta},\R^4)$, 
we have:
\begin{equation}\label{3eqcald2}
\left\|T_{CZ}\left(q_{\widetilde{J^\delta}}(u)\frac{\partial u}{ \partial z}\right)(0)\right\| \leq  
c\left\|q_{\widetilde{J^\delta}}(u)\frac{\partial}{ \partial z}u\right\|_{\mathcal{C}^{r}(\overline{\Delta})} 
\leq c'''\delta^s\|u\|_{\mathcal{C}^{1,r}(\overline{\Delta})}
\end{equation}
for some positive constant $c$ and $c'''$. Moreover, according to (\ref{3eqphidef}) we have: 
\begin{equation}\label{3eqphidef2}
\left\|u\right\|_{\mathcal{C}^{1,r}(\overline{\Delta})}=
\left\|\Phi_{\widetilde{J^\delta}}^{-1}(h_w)\right\|_{\mathcal{C}^{1,r}(\overline{\Delta})}
\leq (1+c''\delta^{s})\|h_w\|_{\mathcal{C}^{1,r}(\overline{\Delta})}\leq 2 \|w\|. 
\end{equation}
Finally (\ref{3eqcald}), (\ref{3eqcald2}) and (\ref{3eqphidef2}) lead to: 
\begin{equation}\label{3eqcald3}
(1-2c'''\delta^s)\|w\|\leq  \left\|\frac{\partial}{\partial x}\left(\Phi_{\widetilde{J^\delta}}^{-1}(h_w)\right)(0)\right\|  \leq  
(1+ 2c'''\delta^s)\|w\|.
\end{equation}
This implies that the map $ \displaystyle w \mapsto \frac{\partial}{\partial x}\left(\Phi_{\widetilde{J^\delta}}^{-1}h_{w}\right)(0)$ 
is a small continuously differentiable perturbation of the identity. More precisely, using (\ref{3eqcald3}),
 there exists a positive constant $c_4$ such that for every vector $v \in \R^4\setminus \{0\}$ and for 
$r=1-c_4\delta^{s}$, there is a vector $w\in \R^4$ satisfying  $\|w\|\leq 1+c_3\delta^{s}$ and such that 
$\frac{\partial}{\partial x}\left(\Phi_{\widetilde{J^\delta}}^{-1}h_{w}\right)(0)=rv/\|v\|$ (see Figure 3). 
\bigskip
\begin{center}
\begin{picture}(0,0)%
\includegraphics{deform.pstex}%
\end{picture}%
\setlength{\unitlength}{2250sp}%
\begingroup\makeatletter\ifx\SetFigFont\undefined%
\gdef\SetFigFont#1#2#3#4#5{%
  \reset@font\fontsize{#1}{#2pt}%
  \fontfamily{#3}\fontseries{#4}\fontshape{#5}%
  \selectfont}%
\fi\endgroup%
\begin{picture}(6405,5177)(2236,-6444)
\put(6076,-4186){\makebox(0,0)[lb]{\smash{{\SetFigFont{11}{13.2}{\familydefault}{\mddefault}{\updefault}{\color[rgb]{0,0,0}$0$}%
}}}}
\put(2251,-3811){\makebox(0,0)[lb]{\smash{{\SetFigFont{11}{13.2}{\familydefault}{\mddefault}{\updefault}{\color[rgb]{0,0,0}$\Phi_{\widetilde{J^\delta}}^{-1}h_{w}$}%
}}}}
\put(3826,-6286){\makebox(0,0)[lb]{\smash{{\SetFigFont{11}{13.2}{\familydefault}{\mddefault}{\updefault}{\color[rgb]{0,0,0}$h_w$}%
}}}}
\put(8626,-3436){\makebox(0,0)[lb]{\smash{{\SetFigFont{11}{13.2}{\familydefault}{\mddefault}{\updefault}{\color[rgb]{0,0,0}$w$}%
}}}}
\put(6826,-1486){\makebox(0,0)[lb]{\smash{{\SetFigFont{11}{13.2}{\familydefault}{\mddefault}{\updefault}{\color[rgb]{0,0,0}$rv/\|v\|$}%
}}}}
\end{picture}%

\end{center}
\begin{center}
Figure 3. Deformation of a standard holomorphic disc.
\end{center}
\bigskip
Hence the  $\widetilde{J^\delta}$-holomorphic disc $\Phi_{\widetilde{J^\delta}}^{-1}h_{w}~: \Delta \rightarrow \B$ satisfies 
\begin{equation*}
\left\{
\begin{array}{lll}  
\Phi_{\widetilde{J^\delta}}^{-1}h_{w}(0)& = &0 ,\\
&&\\
\frac{\partial}{\partial x}\Phi_{\widetilde{J^\delta}}^{-1}h_{w}(0)&=&r\frac{v}{\|v\|}.\\
\end{array}
\right.
\end{equation*}
This proves estimate (\ref{3ee4}), giving the upper estimate  of Theorem A. 
 
The lower estimate (\ref{3eqfinalement}) and the upper estimate (\ref{3ee4}) imply estimate (\ref{3est2a}) of  Theorem A.

\end{proof}

\nocite{*}

\end{document}